\numberwithin{equation}{section}
\theoremstyle{thmit} % Numbered and Italic
\newtheorem{theorem}{Theorem}[section]
\newtheorem{lemma}[theorem]{Lemma}
\newtheorem{corollary}[theorem]{Corollary}
\newtheorem{proposition}[theorem]{Proposition}
\newcommand{\snorm}[1]{{\left\vert #1 \right\vert}}
\newcommand{\norm}[1]{{\left\vert\kern-0.25ex\left\vert #1 
    \right\vert\kern-0.25ex\right\vert}}
\newcommand{\wbnorm}[3]{\norm{#1}_{#2,#3}}
\newcommand{\tnorm}[1]{{\left\vert\kern-0.25ex\left\vert\kern-0.25ex\left\vert #1 
    \right\vert\kern-0.25ex\right\vert\kern-0.25ex\right\vert}}
\newcommand{\ibnorm}[4]{\tnorm{#1}_{#2,#3,#4}}    
\newcommand\ces{\mathsf{C}}
\newcommand\ices{\mathsf{C}^{-1}}
\newcommand\udisk{\mathbb{D}}
\newcommand\pudisk{\mathbb{D}\setminus \{0\}}
\newcommand\dint{\int_{\mathbb{D}}}
\newcommand\intx[1]
\newcommand\cn{\mathbb{C}}
\newcommand\nn{\mathbb{N}}
\newcommand\clo[1]{{\mathcal{L}(#1)}}
\newcommand{\ptsp}[2]{\sigma_{pt}(#1;#2)}
\newcommand{\spec}[2]{\sigma(#1;#2)}
\newcommand{\stsp}[2]{\sigma^*(#1;#2)}
\newcommand\ind{\operatorname{ind}}
\newcommand\indx[2]{{\underset{#1 \in #2}{\operatorname{ind}}}}
\newcommand\projx[2]{{\underset{#1 \in #2}{\operatorname{proj}}}}
\newcommand{\goesto}[1]{\xrightarrow[#1]{}}
\newcommand\supx[2]{{\sup_{#1 \in #2}}}
\newcommand\limx[2]{{\lim_{#1 \to #2}}}
\newcommand\sumx[3]{{\sum_{#1=#2}^{#3} }}
\newcommand{\tay}[2]{{\hat{#1}(#2)}}
\newcommand{\iberg}[2]{A^{#1}_{#2 +}}
\newcommand{\uberg}[2]{A^{#1}_{#2 -}}
\newcommand\holom{H(\mathbb{D})}
\newcommand\analy{H^\infty}
\newcommand{\wberg}[2]{A^{#1}_{#2}}
\newcommand{\imsp}[1]
{{\operatorname{Im}(\lambda I-#1)}}
\newcommand\dif{\mathrm{d}}
\newcommand{\difs}[1]{{\mathrm{d}s(#1)}}
\newcommand{\diff}[1]{{\mathrm{d}s_{#1}(z)}}
\newcommand{\alphapp}[1]
{{(\alpha+\frac{1}{#1})}}
\newcommand{\alphapm}[1]
{{(\alpha-\frac{1}{#1})}}
\begin{document}

\author{Ersin Kızgut}

\address{Instituto Universitario de Matemática Pura y Aplicada (IUMPA)\\ Universitat Politècnica de València \\ 46071 Valencia \\ Spain \\
}
\email{erkiz@upv.es}

\title{The Cesàro operator on weighted Bergman Fréchet and (LB)-spaces of analytic functions}
\keywords{Cesàro operator, weighted spaces of analytic functions, Bergman spaces, Fréchet spaces, (LB)-spaces, spectrum}
	\subjclass{47A10, 47B38, 46A11, 46A13, 46E10, 47B10}

\maketitle

\begin{abstract}
	The spectrum of the Cesàro operator $\mathsf{C}$ is determined on the spaces which arises as intersections $A^p_{\alpha +}$ (resp. unions $A^p_{\alpha -}$) of Bergman spaces $A_\alpha^p$ of order $1<p<\infty$ induced by standard radial weights $(1-|z|)^\alpha$, for $0<\alpha<\infty$. We treat them as reduced projective limits (resp. inductive limits) of weighted Bergman spaces $A^p_\alpha$, with respect to $\alpha$. Proving that these spaces admit the monomials as a Schauder basis paves the way for using Grothendieck-Pietsch criterion to deduce that we end up with a non-nuclear Fréchet-Schwartz space (resp. a non-nuclear (DFS)-space). We show that $\ces$ is always continuous, while it fails to be compact or to have bounded inverse on $A^p_{\alpha +}$ and $A^p_{\alpha -}$.
\end{abstract}

\section{Introduction}
Let $\holom$ denote the Fréchet space of all analytic functions $f\colon \udisk \to \cn$ equipped with the topology of uniform convergence on the compact subsets of the unit disc $\udisk:=\{z \in \cn:\snorm{z}<1\}$. The classical Cesàro operator $\ces$ is given by
\begin{equation}\label{ces def}
	f \mapsto \ces(f): z \mapsto \frac{1}{z} \int_0^z \frac{f(\zeta)}{1-\zeta} \dif \zeta, \quad z \in \pudisk, \, \ces(f)(0):=f(0),
\end{equation}
for $f \in \holom$. The Cesàro operator $\ces$ is an isomorphism of $\holom$ onto itself. From \eqref{ces def} one may obtain that
\begin{equation}\label{recover from ces}
	f(z)= (1-z)(z\ces(f)(z))', \quad f \in \holom.
\end{equation}
In the sense of Taylor coefficients
\begin{equation}\label{tayf}
\tay{f}{j} :=\frac{f^{(j)}(0)}{j!}, \quad n \in \nn_0
\end{equation}
of the function $f\in \holom$ given by
\begin{equation}\label{taylor representation}
	f(z)=\sumx{j}{0}{\infty} \tay{f}{j} z^j,
\end{equation}
one has the expression
\begin{equation}
	\ces f(z)=\sumx{k}{0}{\infty} \left(\frac{1}{k+1}\sumx{j}{0}{k} \tay{f}{j}\right)z^k, \quad z \in \udisk.
\end{equation}
 Let $\clo{E}$ denote the space of all continuous functions on a topological vector space $E$. For $\varphi \in \analy$, we denote $M_\varphi$ the operator in $\holom$ of multiplication by $\varphi$ so that $M_\varphi \in \clo{\holom}$. The differentiation operator $D \colon f \mapsto f'$ for $f \in \holom$ also belongs to $\clo{\holom}$. Then it follows from \eqref{ces def} that $\ices \in \clo{\holom}$ given by $\ices=M_{1-z}DM_z$ (see \cite[p. 1185]{Per08}). That is, for all $h \in \holom$ we have
\begin{equation}\label{ices with mult}
	\ices (h)(z)=(1-z)(h(z)+zh'(z)), \quad z \in \udisk.
\end{equation}
The continuity, compactness and spectrum of generalized Cesàro operators on Banach spaces of analytic functions on the unit disc have been studied by many authors \cite{AMN05, AC01, Ale09, Ale10, Ale12, AS95, AS97, And96, Cow84, CR11, HLP34, Per08, Sis90, Sis96}. Continuity of the Cesàro operator on the Hilbert space $H^2(\udisk)$ was studied by Hardy, Littlewood, and Pólya \cite{HLP34}. Continuity of $\ces$ on the general Hardy spaces and unweighted Bergman spaces $A^p$ is due to Siskakis \cite{Sis90,Sis96}. Andersen \cite{And96} proved that the Cesàro operator is bounded on a class of spaces of analytic functions on the unit disc, including the weighted Bergman space. Boundedness and compactness of the class of a certain type of integral operators (also containing the Cesàro operator) acting on spaces of analytic functions on the unit disc have been studied by Aleman and Cima \cite{AC01}, Aleman and Siskakis \cite{AS97}. Persson \cite{Per08}, Aleman and Constantin \cite{Ale09}, Aleman and Persson \cite{Ale10} investigated the spectrum of (generalized) Cesàro operators on various spaces of analytic functions such as Hardy spaces, weighted Bergman spaces and Dirichlet spaces in detail. We refer the reader to the introduction of \cite{Ale10} for a comprehensive information on the development of the research in this area. The \textit{Bergman space} $\wberg{p}{\alpha} =\wberg{p}{\alpha}(\udisk)$ of order $1<p<\infty$ induced by standard radial weight $(1-\snorm{z})^\alpha$ for $0<\alpha<\infty$ is given by
\begin{equation}\label{weighted bergman space}
	\wberg{p}{\alpha}:=\{f \in \holom: \wbnorm{f}{p}{\alpha} =\left(\dint \snorm{f(z)}^p \diff{\alpha}\right)^{1/p}<\infty\},
\end{equation}
where $\diff{\alpha}=(1-\snorm{z})^\alpha \difs{z}$, and $\difs{z}=\frac{1}{\pi}\dif x \dif y$. Some authors prefer to define the space $\wberg{p}{\alpha}$ with the weight $(1-\snorm{z}^2)^\alpha$ instead of $(1-\snorm{z})^\alpha$. Since we have $1-\snorm{z} \leq 1-\snorm{z}^2 \leq 2(1-\snorm{z})$, these spaces coincide and the norms are equivalent. Each $\wberg{p}{\alpha}$ is a closed subspace of $L^p(\udisk, \diff{\alpha})$ in which the polynomials are dense \cite[Section 1.1]{HKZ00}. The weighted Bergman space $\wberg{p}{\alpha}$ is a Banach space with the norm $\wbnorm{\cdot}{p}{\alpha}$. Classical Bergman space $A^p(\udisk)$ corresponds to the case $\alpha=0$. Contrary to $\holom$, the Cesàro operator $\ces$ does not have a bounded inverse on the Banach space $\wberg{p}{\alpha}$ (see e.g. \cite{Per08}). The aim of this paper is to investigate the Cesàro operator $\ces$ on spaces that arise as intersections and unions of Bergman spaces of order $1<p<\infty$ induced by the standard weights $(1-\snorm{z})^\alpha$ for $0<\alpha<\infty$:
\begin{align}
	\iberg{p}{\alpha} & :=\{f \in \holom:\left(\dint \snorm{f(z)}^p \diff{\mu} \right)^{1/p}<\infty, \, \forall \mu>\alpha\} \nonumber \\
	& = \bigcap_{\mu>\alpha} \wberg{p}{\mu}  = \bigcap_{n \in \nn} \wberg{p}{\alphapp{n}} \nonumber \\ 
	& =  \projx{n}{\nn} \wberg{p}{\alphapp{n}}, \label{projlim verion} \\
	\uberg{p}{\alpha} & := \{f \in \holom:\left(\dint \snorm{f(z)}^p \diff{\mu} \right)^{1/p} <\infty, \text{ for some }\mu <\alpha\}\nonumber \\
	& = \bigcup_{\mu<\alpha} \wberg{p}{\mu} = \bigcup_{n \in \nn} \wberg{p}{\alphapm{n}}\nonumber\\ 
	& =\indx{n}{\nn} \wberg{p}{\alphapm{n}} \label{injlim verion},
\end{align}
The monograph \cite{HKZ00} presents an investigation of Bergman type spaces (of infinite order) in that fashion with relevance to interpolation and sampling of analytic functions. The paper \cite{KR05} gives a description of intersections and unions of weighted Bergman spaces of order $0<p<\infty$. Unlike those, we treat the space $\iberg{p}{\alpha}$ as a Fréchet space when equipped with the locally convex topology generated by the increasing system of norms
\begin{equation}\label{norm in ibergp}
\ibnorm{f}{p}{\alpha}{n}	:=\left(\dint \snorm{f(z)}^p \diff{\alphapp{n}}\right)^{1/p},
\end{equation}
for $f \in \iberg{p}{\alpha}$ and each $n \in \nn$. The space $\uberg{p}{\alpha}$ is an (LB)-space endowed with the finest locally convex topology such that each natural inclusion map from $\wberg{p}{\mu}$ into $\wberg{p}{\gamma}$, for $0<\mu<\gamma$ is continuous. It is also regular, since every bounded set $B \subseteq \uberg{p}{\alpha}$ is contained and bounded in the Banach space $\wberg{p}{\mu}$, for some $0<\mu<\alpha$. We also mention that for $0<\beta<\alpha<\infty$, we have $\wberg{p}{\beta} \subset \uberg{p}{\alpha} \subset \wberg{p}{\alpha} \subset \iberg{p}{\alpha}$.

We address the inspiration and motivation of this research to three sources. Aleman and Constantin \cite{Ale09} investigated the spectrum of the Cesàro operator on weighted Bergman spaces. Albanese, Bonet and Ricker \cite{ABR16} studied continuity, compactness and spectrum of the Cesàro operator in growth Banach spaces. In \cite{ABR18-3} they conducted the same investigation for Cesàro operator within the context of intersections and unions of growth spaces. We keep their setup in present paper, and use it for weighted Bergman spaces. Retaining similar techniques, the patterns of our proofs are very close to the those of \cite{ABR18-3} concerning the spectrum of the Cesàro operator. 

The paper is organized as follows. In Section~\ref{section: ces on iberg}, we focus on the properties of the Cesàro operator $\ces$ defined on $\iberg{p}{\alpha}$ and $\uberg{p}{\alpha}$. We immediately reach that $\ces$ is always continuous on $\iberg{p}{\alpha}$ and $\uberg{p}{\alpha}$ since it is continuous on each step. To show that $\ces$ is not an isomorphism on $\iberg{p}{\alpha}$ and $\uberg{p}{\alpha}$, we construct specific functions. Then, we determine its spectrum on $\iberg{p}{\alpha}$ and $\uberg{p}{\alpha}$. Spectral properties of $\ces$ reveals that it is non-compact on $\iberg{p}{\alpha}$ and $\uberg{p}{\alpha}$. In Section~\ref{section: discussion on iberg} we concentrate on the structural properties of $\iberg{p}{\alpha}$ and $\uberg{p}{\alpha}$. By proving that for each pair $0<\mu<\gamma<\infty$, each inclusion map from $\wberg{p}{\mu}$ to $\wberg{p}{\gamma}$ is compact, we establish that $\iberg{p}{\alpha}$ is a Fréchet-Schwartz space and $\uberg{p}{\alpha}$ is a (DFS)-space. Using the approach in \cite[Section 2]{BLT18}, we show that $\iberg{p}{\alpha}$ and $\uberg{p}{\alpha}$ are non-nuclear spaces both admitting the monomials $\{z^j\}_{j=1}^\infty$ as a Schauder basis. 

\section{The Cesàro operator $\ces$ on $\iberg{p}{\alpha}$ and $\uberg{p}{\alpha}$}\label{section: ces on iberg}
\subsection{$\ces$ is continuous on $\iberg{p}{\alpha}$ and $\uberg{p}{\alpha}$}
Boundedness of the Cesàro operator on the Banach space $\wberg{p}{\alpha}$ is due to Andersen \cite[Corollary 1.2]{And96}, as it is proved for a more general class of spaces of analytic functions on the unit disc. This implies $\ces$ is continuous at every step $\wberg{p}{\alphapp{n}}$ or $\wberg{p}{\alphapm{n}}$, and hence $\ces$ is also continuous on $\iberg{p}{\alpha}$ and $\uberg{p}{\alpha}$ by means of the properties of Fréchet and inductive limit topologies, respectively. Both $\iberg{p}{\alpha}$ and $\uberg{p}{\alpha}$ contain polynomials. Hence $\iberg{p}{\alpha}$ is dense in $\wberg{p}{\mu}$, for every $\mu>\alpha$ and $\uberg{p}{\alpha}$ is dense in $\wberg{p}{\alpha}$, for every $0<\alpha<\infty$. Let us denote $\iota_n\colon\iberg{p}{\alpha} \hookrightarrow\wberg{p}{\alphapp{n}}$ and $\iota_{n,n+1}\colon \wberg{p}{\alphapp{n+1}}\hookrightarrow \wberg{p}{\alphapp{n}}$ the canonical maps with dense range.  We denote the Cesàro operator $\ces_n \colon \wberg{p}{\alphapp{n}} \to \wberg{p}{\alphapp{n}}$, for each $n \in \nn$. Observe that $\iota  \ces = \ces_n  \iota_n$ and $\iota_{n,n+1} \ces_{n+1}=\ces_n \iota_{n,n+1}$, for every $n \in \nn$. 

\subsection{Inverse of $\ces$ on $\iberg{p}{\alpha}$ and $\uberg{p}{\alpha}$}
\begin{proposition}\label{ces isom on ibergp}
	Let $1<p<\infty$, and $0 < \alpha<\infty$. Then, 
\begin{enumerate}[label=\normalfont(\arabic*)]
	\item The Cesàro operator $\ces$ fails to be an isomorphism on $\iberg{p}{\alpha}$.
	\item The Cesàro operator $\ces$ fails to be an isomorphism on $\uberg{p}{\alpha}$.
\end{enumerate}
	
\end{proposition}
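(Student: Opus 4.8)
The plan is to prove both items simultaneously by showing that $\ces$ is not \emph{onto}; an isomorphism of a space onto itself is in particular surjective, so this suffices. Recall from the preceding subsection that $\ces$ is a continuous operator on $\iberg{p}{\alpha}$ and on $\uberg{p}{\alpha}$; it is moreover injective there, being the restriction of the automorphism $\ces$ of $\holom$, whose inverse is $\ices=M_{1-z}DM_z$, i.e., $\ices g=(1-z)(g+zg')$ by \eqref{ices with mult}. Hence it is enough to produce, in each of the two spaces, a single $g$ with $\ices g$ lying outside it. The test functions I would use are
\begin{equation*}
g_\beta(z):=(1+z)^{-\beta},\qquad\beta>0,
\end{equation*}
with the principal branch; each $g_\beta\in\holom$ since $\operatorname{Re}(1+z)>0$ on $\udisk$. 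Placing the singularity at $z=-1$ is the whole point: the factor $1-z$ carried by $\ces^{-1}$ is bounded below near $z=-1$, so it cannot absorb the extra order of growth introduced by differentiation.

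The computational core is the identity
\begin{equation*}
\ices g_\beta(z)=(1-z)\bigl(g_\beta(z)+z g_\beta'(z)\bigr)=(1-z)(1+z)^{-\beta-1}\bigl(1+(1-\beta)z\bigr),
\end{equation*}
from which $\ices g_\beta$ is bounded on $\overline{\udisk}$ away from $z=-1$ and $\snorm{\ices g_\beta(z)}\asymp\snorm{1+z}^{-\beta-1}$ as $z\to-1$ (note $1+(1-\beta)(-1)=\beta\neq0$). Using the classical integrability criterion
\begin{equation*}
\dint(1-\snorm{z})^{s}\,\snorm{1+z}^{-t}\,\difs{z}<\infty\iff t<s+2\qquad(s>-1)
\end{equation*}
(which follows from the familiar one for $\snorm{1-z}^{-t}$ by the rotation $z\mapsto-z$), one gets, for every $\nu>0$,
\begin{equation*}
g_\beta\in\wberg{p}{\nu}\iff\beta p<\nu+2,\qquad\ices g_\beta\in\wberg{p}{\nu}\iff(\beta+1)p<\nu+2.
\end{equation*}

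It remains to choose $\beta$. For part (1), take $\beta=(\alpha+2)/p$: then $\beta p=\alpha+2<\mu+2$ for every $\mu>\alpha$, so $g_\beta\in\wberg{p}{\mu}$ for all $\mu>\alpha$, i.e., $g_\beta\in\iberg{p}{\alpha}$; but $(\beta+1)p=\alpha+2+p$, so $\ices g_\beta\notin\wberg{p}{\nu}$ whenever $\nu\le\alpha+p$, in particular $\ices g_\beta$ fails to belong to any step $\wberg{p}{\alphapp{n}}$, hence $\ices g_\beta\notin\iberg{p}{\alpha}$. For part (2), pick any $\beta$ with $\max\{0,(\alpha+2)/p-1\}<\beta<(\alpha+2)/p$; this interval is nonempty because $(\alpha+2)/p$ is strictly larger than both $0$ and $(\alpha+2)/p-1$. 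Then $\beta p<\alpha+2$ gives $g_\beta\in\wberg{p}{\mu}$ for some $\mu<\alpha$, i.e., $g_\beta\in\uberg{p}{\alpha}$, while $(\beta+1)p-2>\alpha$ gives $\ices g_\beta\notin\wberg{p}{\mu}$ for every $\mu<\alpha$, i.e., $\ices g_\beta\notin\uberg{p}{\alpha}$. In each case $g_\beta$ lies in the space yet is absent from the range of $\ces$, so $\ces$ is not an isomorphism.

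Apart from the one cited integrability lemma the argument is elementary, and I expect no real obstacle. The only delicate point is the bookkeeping in part (2): one needs $\beta$ strictly positive (else $g_\beta$ is bounded and the example is vacuous) and simultaneously $\beta\ge(\alpha+2)/p-1$ (else $\ices g_\beta$ stays inside $\uberg{p}{\alpha}$), and then a quick check that the window $(\max\{0,(\alpha+2)/p-1\},(\alpha+2)/p)$ is nonempty for every $1<p<\infty$ and $0<\alpha<\infty$. Conceptually the proof says no more than: differentiation costs $p$ units of the weight exponent, which the factor $1-z$ in $\ces^{-1}$ recovers only for a singularity sitting at $z=1$.
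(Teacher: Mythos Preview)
Your proposal is correct and follows essentially the same approach as the paper: both exhibit test functions of the form $(1+z)^{-\beta}$ lying in the space and verify that $\ices$ maps them out, exploiting that the factor $1-z$ in $\ices=M_{1-z}DM_z$ cannot compensate for the extra order of singularity at $z=-1$ coming from the derivative. The only difference is cosmetic: the paper estimates the relevant integral by hand over a Stolz region at $-1$, while you invoke the standard Forelli--Rudin-type integrability criterion, which makes the bookkeeping cleaner.
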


\begin{proof}
	Let $p>1$. Then, there exists $\varepsilon\in (0,1)$ such that $p\geq 1+2\varepsilon$. 
\begin{enumerate}[wide, labelwidth=!, labelindent=0pt, label=\normalfont(\arabic*)]\setlength\itemsep{0.5em}
\item Given $0<\alpha<\infty$, define $f_\varepsilon(z):=\frac{1}{(1+z)^{\frac{\alpha+1-\varepsilon}{p}}}$ for $z \in \udisk$. Since $1=\snorm{1+z-z}\leq \snorm{1+z}+\snorm{z}$, straightforward calculation shows $f_\varepsilon \in \wberg{p}{\alpha}$, and so $f_\varepsilon \in \iberg{p}{\alpha}$. Suppose that $\ices f_\varepsilon \in \iberg{p}{\alpha}$. Since clearly $(1-z)f_\varepsilon \in \iberg{p}{\alpha}$, this is equivalent to assume that $z(1-z)f'_\varepsilon \in \iberg{p}{\alpha}$. Let us define the region $A \subset \udisk$  by the intersection of $\operatorname{Re}(z)\leq -\frac{1}{2}$, and the Stolz angle with vertex $(-1,0)$ in which the inequality $\snorm{1+z}\leq 2 (1-\snorm{z})$ is satisfied (see e.g. \cite[Lemma 6.20]{Loy18}). It is easy to verify that $\snorm{1+z}<1$ and $\snorm{1-z}>\snorm{z}>\frac{1}{2}$ whenever $z \in A$. Let us pick $n_0 \in \nn$ such that $\frac{1}{n}<\varepsilon$, for all $n \geq n_0$.  Hence, $p+1-\varepsilon-\frac{1}{n} \geq 2$ for every $n\geq n_0$. Then, by the fact that $z(1-z)f'_\varepsilon \in \wberg{p}{\alphapp{n}}$, for a constant $M>0$ and for all $n\geq n_0$ we have
\begin{align*}
	M & \geq \ibnorm{z(1-z)f'_\varepsilon}{p}{\alpha}{n}^p = \dint \snorm{z(1-z)f'_\varepsilon(z)}^p \diff{\alphapp{n}} \\
	& \geq C(\alpha,\varepsilon)\intx{A} \snorm{\frac{z(1-z)}{(1+z)^{\frac{\alpha+1-\varepsilon}{p}+1}}}^p \diff{\alphapp{n}} \\
	&\geq C(p,\alpha,\varepsilon)  \intx{A} \snorm{\frac{1}{(1+z)^{\frac{\alpha+1-\varepsilon}{p}+1}}}^p\diff{\alphapp{n}} \\
	& \geq C(p,\alpha,n,\varepsilon) \intx{A} \frac{\difs{z}}{\snorm{1+z}^2}  = C(p,\alpha,n,\varepsilon) \int\limits_{-1}^{-\frac{1}{2}} \frac{1}{1+x} \arctan\left(\frac{y_0}{1+x}\right) \dif x,
\end{align*}
where $y_0:=\frac{1}{3}\sqrt{-9x^2+6x-8\sqrt{6x+7}+23}$. An integration by parts shows that the right hand side fails to be convergent. This is a contradiction.
\item
Define $g_\varepsilon:=\frac{1}{(1+z)^{\frac{\alpha+1-2\varepsilon}{p}}}$, for $z \in \udisk$. Note that there exists $n_\varepsilon \in \nn$ such that $\frac{1}{n}<\varepsilon$, for all $n \geq n_\varepsilon$. Then, similar to part (1) we obtain  $g_\varepsilon \in \wberg{p}{\alphapm{n_\varepsilon}}$, and so $g_\varepsilon \in \uberg{p}{\alpha}$. Now suppose that $\ices$ is continuous on $\uberg{p}{\alpha}$. Then, there exists $m>n_\varepsilon$ such that the restriction $\ices\colon\wberg{p}{\alphapm{n_\varepsilon}}\to\wberg{p}{\alphapm{m}}$ is continuous. Then, for a constant $M>0$, and the region $A\subset \udisk$ defined in part (1)
\begin{align*}
	M & \geq \wbnorm{z(1-z)g'_\varepsilon}{p}{\alphapm{m}}^p  = \dint \snorm{z(1-z)g'_\varepsilon(z)}^p \diff{\alphapm{m}}\\
	& \geq C(\alpha,\varepsilon) \intx{A} \snorm{\frac{z(1-z)}{(1+z)^{\frac{\alpha+1-2\varepsilon}{p}+1}}}^p \diff{\alphapm{m}}\\
	& \geq C(p,\alpha,\varepsilon) \intx{A} \frac{\diff{\alphapm{m}}}{\snorm{1+z}^{\alpha+1-2\varepsilon+p}} \\
	& \geq C(p,\alpha,m,\varepsilon) \intx{A} \frac{\difs{z}}{\snorm{1+z}^{p+1-2\varepsilon+\frac{1}{m}}} \geq C(p,\alpha,m,\varepsilon) \intx{A} \frac{\difs{z}}{\snorm{1+z}^2}.
\end{align*}
Similar to part (1), the right hand side is not convergent. This is a contradiction. 
\end{enumerate}
\end{proof}

\begin{corollary}
	Let $1<p<\infty$, and $0 <\alpha<\infty$. Then, 
\begin{enumerate}[label=\normalfont(\arabic*)]
	\item The differentiation operator $D$ does not act on on $\iberg{p}{\alpha}$.
	\item The differentiation operator $D$ does not act on on $\uberg{p}{\alpha}$.
\end{enumerate}
\end{corollary}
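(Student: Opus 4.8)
The plan is to read the corollary off the factorization $\ices = M_{1-z}DM_z$ of \eqref{ices with mult} together with the explicit functions already constructed in the proof of Proposition~\ref{ces isom on ibergp}. The only ingredient beyond that proof is the trivial observation that multiplication by a bounded analytic symbol preserves the spaces at hand: if $\varphi \in \analy$, then $\wbnorm{\varphi f}{p}{\mu} \leq \norm{\varphi}_\infty \wbnorm{f}{p}{\mu}$ for every $\mu>0$, so $M_\varphi \in \clo{\wberg{p}{\mu}}$, and consequently $M_\varphi$ maps $\iberg{p}{\alpha} = \bigcap_{\mu>\alpha}\wberg{p}{\mu}$ into itself and (using regularity) $\uberg{p}{\alpha} = \bigcup_{\mu<\alpha}\wberg{p}{\mu}$ into itself. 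In particular $M_z$ and $M_{1-z}$ act on both $\iberg{p}{\alpha}$ and $\uberg{p}{\alpha}$.

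For (1), I would argue by contradiction: suppose $D$ maps $\iberg{p}{\alpha}$ into itself. With $f_\varepsilon(z) = (1+z)^{-(\alpha+1-\varepsilon)/p} \in \iberg{p}{\alpha}$ as in the proof of Proposition~\ref{ces isom on ibergp}(1), we would then have $Df_\varepsilon = f_\varepsilon' \in \iberg{p}{\alpha}$, hence $z(1-z)f_\varepsilon' = M_{1-z}M_zf_\varepsilon' \in \iberg{p}{\alpha}$ by the previous paragraph; but the estimate carried out there shows $z(1-z)f_\varepsilon' \notin \wberg{p}{\alphapp{n}}$ for all large $n$, so $z(1-z)f_\varepsilon' \notin \iberg{p}{\alpha}$, a contradiction. (Equivalently: $f_\varepsilon \in \iberg{p}{\alpha}$ while $Df_\varepsilon \notin \iberg{p}{\alpha}$.) Statement (2) follows in exactly the same way, using $g_\varepsilon(z) = (1+z)^{-(\alpha+1-2\varepsilon)/p} \in \uberg{p}{\alpha}$ from the proof of Proposition~\ref{ces isom on ibergp}(2): if $D$ acted on $\uberg{p}{\alpha}$, then $z(1-z)g_\varepsilon' \in \uberg{p}{\alpha}$, contradicting the divergence of the integral appearing in that proof.

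There is essentially no obstacle here: the corollary is a direct consequence of Proposition~\ref{ces isom on ibergp}, and the only two points to be attentive to are (i) to quote the quantitative estimates \emph{inside} that proof rather than merely its statement, and (ii) to record that $M_z$ and $M_{1-z}$ are continuous endomorphisms of both spaces, so that any failure of $D$ to act would propagate, via $\ices = M_{1-z}DM_z$, to the already-excluded possibility that $\ices$ maps the whole space into itself.
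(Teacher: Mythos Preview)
Your argument is correct and rests on the same two facts the paper uses: the factorization $\ices = M_{1-z}DM_z$ from \eqref{ices with mult}, and the continuity of $M_z$, $M_{1-z}$ on both $\iberg{p}{\alpha}$ and $\uberg{p}{\alpha}$. The paper's proof, however, is shorter: it simply observes that if $D$ acted on $\iberg{p}{\alpha}$ (resp.\ $\uberg{p}{\alpha}$), then $\ices = M_{1-z}DM_z$ would act on that space, contradicting the \emph{statement} of Proposition~\ref{ces isom on ibergp}. So your point~(i) --- that one must quote the quantitative estimates from inside that proof rather than merely its statement --- is unnecessary; the statement alone suffices, and your detour through the explicit functions $f_\varepsilon$, $g_\varepsilon$ and the divergence of $\int_A |1+z|^{-2}\,\difs{z}$ can be dropped. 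You in fact sketch this cleaner route yourself in your final paragraph; that \emph{is} the whole proof.
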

\begin{proof}
	Suppose that $D \in \clo{\iberg{p}{\alpha}}$. Since clearly both $M_{1-z}$ and $M_z$ are continuous in $\iberg{p}{\alpha}$, then by \eqref{ices with mult}, $\ices=M_{1-z} D  M_z$ is continuous in $\iberg{p}{\alpha}$. However, this contradicts Proposition~\ref{ces isom on ibergp}(1). Part (2) is similar. 
\end{proof}

\subsection{Spectrum of $\ces$ on $\iberg{p}{\alpha}$}\label{subsection: spectrum on i}
Let $X$ be a locally convex Hausdorff space, and $\Gamma_X$ a system of continuous seminorms determining the topology of $X$. Let $X'$ denote the space of all continuous linear functionals on $X$. Denote the identity operator on $X$ by $I$. Let $\clo{X}$ denote the space of all continuous linear operators from $X$ into itself. For $T \in \clo{X}$, the \textit{resolvent set} $\rho(T)$ of $T$ consists of all $\lambda \in \cn$ such that $R(\lambda,T):=(\lambda I-T)^{-1}$ exists in $\clo{X}$. The set $\spec{T}{X}:=\cn\setminus \rho(T)$ is called the \textit{spectrum} of $T$. The \textit{point spectrum} $\ptsp{T}{X}$ of $T$ consists of all $\lambda \in \cn$ such that $(\lambda I-T)$ is not injective. Contrary to Banach spaces, concerning the spectrum of an operator $T$ on the Fréchet space $X$, one may encounter that $\rho(T) = \varnothing$ or $\rho(T)$ fails to be an open set in $\cn$. For this reason, some authors prefer to consider the subset $\rho^*(T)$ of $\rho(T)$ consisting of $\lambda \in \cn$ such that there exists $\delta>0$ such that $B(\lambda,\delta):=\{z \in \cn: \snorm{z-\lambda}<\delta\}\subseteq \rho(T)$ and $\{R(\mu,T):\mu \in B(\lambda,\delta)\}$ is equicontinuous in $\clo{X}$. Define the \textit{Waelbroeck spectrum} $\stsp{T}{X}:= \cn\setminus \rho^*(T;X)$, which is a closed set containing $\spec{T}{X}$. If $T \in \clo{X}$ with $X$ a Banach space, then $\stsp{T}{X}=\spec{T}{X}$. For every $r\geq 1$ we denote the open disk by $D_r:=\left\{\lambda \in \cn:\snorm{\lambda-\frac{1}{2r}}<\frac{1}{2r}\right\}$. Let us write $\overline{D}_r:=\left\{\lambda \in \cn:\snorm{\lambda-\frac{1}{2r}}\leq\frac{1}{2r}\right\}$. 
 \begin{proposition}\textup{\cite[Theorem A; B]{Per08}}\label{dahlner result}
 	For the weighted Bergman space $\wberg{p}{\alpha}(\udisk)$, $p \geq 1$, $\alpha \geq 0$, the following statements hold:
 	\begin{enumerate}[label=\normalfont(\roman*)]
 	\item  For each $\lambda$ in the interior of $\spec{\ces}{\wberg{p}{\alpha}}$, the set $\imsp{\ces}$ is a closed one codimensional subspace of $\wberg{p}{\alpha}$.
 	\item $\ptsp{\ces}{\wberg{p}{\alpha}}=\left\{\frac{1}{m}: m \in \nn, \, m <\frac{2+\alpha}{p}\right\}$.
 	\item $\spec{\ces}{\wberg{p}{\alpha}}=\overline{D}_{\frac{2+\alpha}{p}}\cup  \ptsp{\ces}{\wberg{p}{\alpha}}$.
 	\end{enumerate}
 \end{proposition}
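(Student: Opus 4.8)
The plan is to route everything through the first‑order linear ODE that the inversion formula \eqref{recover from ces} attaches to $\ces$, and to read the three claims off the analyticity requirement and the weighted $L^p$ integrability of the resulting solutions. Throughout I use the elementary criterion $\int_{\udisk}(1-\snorm z)^{a}\snorm{1-z}^{-b}\,\difs{z}<\infty\iff b<a+2$ (valid for $a>-1$) together with the pointwise bound $\snorm{g(z)}\lesssim (1-\snorm z)^{-(2+\alpha)/p}\wbnorm g p\alpha$ for $g\in\wberg p\alpha$.

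\textit{Point spectrum.} For $\lambda\neq 0$ put $h=\ces f$; since $f=(1-z)(zh)'$ by \eqref{recover from ces}, the equation $\ces f=\lambda f$ reads $f=\lambda(1-z)(f+zf')$, i.e. $\tfrac{f'}{f}=\tfrac{1/\lambda-1}{z}+\tfrac{1/\lambda}{1-z}$, whence $f(z)=c\,z^{1/\lambda-1}(1-z)^{-1/\lambda}$. Such an $f$ is analytic and not identically zero on $\udisk$ only when $1/\lambda-1\in\nn_0$, i.e. $\lambda=1/m$ with $m\in\nn$, giving $f(z)=c\,z^{m-1}(1-z)^{-m}$; by the integrability criterion this lies in $\wberg p\alpha$ precisely when $mp<\alpha+2$. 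Since $\ces f=0$ forces $f=0$ by \eqref{recover from ces}, $0$ is not an eigenvalue, and (ii) follows.

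\textit{Resolvent and the disk.} Given $g\in\wberg p\alpha$ and $\lambda\neq 0$, solving $(\lambda I-\ces)f=g$ with $h=\ces f$ amounts to the linear ODE $\lambda z(1-z)h'+[\lambda(1-z)-1]h=g$, with integrating factor $z^{1-1/\lambda}(1-z)^{1/\lambda}$; this yields $R(\lambda,\ces)g=(1-z)(zh)'$ where
\[
h(z)=z^{1/\lambda-1}(1-z)^{-1/\lambda}\Bigl(C+\tfrac1\lambda\int_{\gamma_z} t^{-1/\lambda}(1-t)^{1/\lambda-1}g(t)\,\dif t\Bigr),
\]
with $C$ and the contour $\gamma_z$ (running from $0$, or from $1$, to $z$) chosen so that the singular terms of the solution at $z=0$ and $z=1$ cancel and $f$ is analytic on $\udisk$ — possible exactly when $1/\lambda\notin\nn$. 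Near $z=1$ the output $f$ carries the factor $(1-z)^{-1/\lambda}$, and a weighted $L^p$ estimate for this integral operator (the integrability criterion with exponent $p\operatorname{Re}(1/\lambda)$, plus a Schur‑type bound for the integral term) shows $R(\lambda,\ces)\in\clo{\wberg p\alpha}$ precisely when $p\operatorname{Re}(1/\lambda)<\alpha+2$. As $\{\lambda\neq0:\operatorname{Re}(1/\lambda)>r\}$ is exactly the disk $D_r$, the two conditions together say $\rho(\ces)=\cn\setminus\bigl(\overline D_{(2+\alpha)/p}\cup\{1/m:1/\lambda=m\in\nn\}\bigr)$; combining this with (ii) and the remark that $1/m\in D_r\iff m>r$ — so the eigenvalues $1/m$, $m<(2+\alpha)/p$, lie off $\overline D_{(2+\alpha)/p}$, while an integer $1/\lambda=m\geq(2+\alpha)/p$ already puts $\lambda$ in $\overline D_{(2+\alpha)/p}$ — gives exactly (iii).

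\textit{Codimension one, and the main obstacle.} The interior of $\spec{\ces}{\wberg p\alpha}$ is the open disk $D_{(2+\alpha)/p}$ (the finitely many points $1/m$ being isolated), and there $\lambda I-\ces$ is injective by (ii), with $1/\lambda\notin\nn$. For such $\lambda$ one is forced to take $C=0$ to control $z=1$, and then $h$ — hence $f=R(\lambda,\ces)g$ — is analytic at $z=0$ if and only if the single compatibility condition $\Phi_\lambda(g):=\int_0^1 t^{-1/\lambda}(1-t)^{1/\lambda-1}g(t)\,\dif t=0$ holds, the integral being regularized near $t=0$; the pointwise bound above shows $\Phi_\lambda$ is a well‑defined nonzero element of $(\wberg p\alpha)'$, and a further weighted $L^p$ estimate shows that when $\Phi_\lambda(g)=0$ the resulting $f$ indeed lies in $\wberg p\alpha$, so $\imsp{\ces}=\ker\Phi_\lambda$ is closed and one‑codimensional, which is (i). The crux of the whole argument is the sharp two‑sided control of the resolvent integral operator on $\wberg p\alpha$ that pins the radius at $(2+\alpha)/p$: the Forelli--Rudin/Schur upper bounds must be matched by a construction exhibiting that $\lambda I-\ces$ is not onto throughout the closed disk, and in (i) one must additionally verify the boundedness and non‑triviality of $\Phi_\lambda$ together with the sufficiency of $\Phi_\lambda(g)=0$ for solvability.
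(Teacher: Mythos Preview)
The paper does not prove this proposition at all: it is quoted verbatim as \cite[Theorem~A;~B]{Per08} and used as a black box throughout Section~\ref{section: ces on iberg}. So there is no ``paper's own proof'' to compare against --- the author simply imports Persson's result. Your write-up is therefore not a comparison candidate but rather an attempted reconstruction of the cited source.

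That said, your outline is in the right spirit and is essentially the route taken in \cite{Per08} (and in the related work \cite{Ale09,Ale10}): reduce $(\lambda I-\ces)f=g$ to a first-order linear ODE via \eqref{recover from ces}, solve with the integrating factor $z^{1-1/\lambda}(1-z)^{1/\lambda}$, and read off analyticity and membership in $\wberg{p}{\alpha}$ from the behaviour of $z^{1/\lambda-1}(1-z)^{-1/\lambda}$. Your derivation of (ii) is clean and correct. For (i) and (iii) you are honest that the hard analytic step --- the sharp two-sided $L^p$ bounds that locate the boundary circle at radius $p/(2(2+\alpha))$ and the proof that $\Phi_\lambda\in(\wberg{p}{\alpha})'$ with $\ker\Phi_\lambda=\operatorname{Im}(\lambda I-\ces)$ --- is only asserted, not carried out. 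That is exactly where Persson's paper does the real work (via Littlewood--Paley type estimates and a careful analysis of the integral operator), so your sketch is a fair roadmap but not a proof. One small slip: the interior of $\spec{\ces}{\wberg{p}{\alpha}}$ need not exclude all the eigenvalues $1/m$ --- when $(2+\alpha)/p$ is large some of them sit inside $D_{(2+\alpha)/p}$ rather than being isolated; this does not affect (i), since injectivity of $\lambda I-\ces$ is not required for the codimension-one statement, but your phrasing ``there $\lambda I-\ces$ is injective by (ii)'' is inaccurate.
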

  To prove one of our main theorems, we need the following abstract spectral result.
\begin{lemma}\textup{\cite[Lemma 2.1]{ABR17}}\label{union of spectra}
	Let $E=\bigcap_{n \in \nn}E_n$ be a Fréchet space which is the intersection of a sequence of Banach spaces $((E_n,\norm{\cdot}_n))_{n \in \nn}$ satisfying $E_{n+1} \subseteq E_n$ with $\norm{x}_n \leq \norm{x}_{n+1}$, for all $n \in \nn$ and $x \in E_{n+1}$. Let $T \in \clo{E}$ satisfy:
	 \begin{enumerate}[label=\normalfont(\Alph*)]
	 	\item For all $n \in \nn$, there exists $T_n \in \clo{E_n}$ such that the restriction of $T_n$ to $E$ (resp. of $T_n$ to $E_{n+1}$) coincides with $T$ (resp. $T_{n+1}$).
	 \end{enumerate}
Then, the following statements hold:
\begin{enumerate}[wide, labelwidth=!, labelindent=0pt, label=\normalfont(\roman*)]\setlength\itemsep{0.5em}
\item $\spec{T}{E} \subseteq \cup_{n \in \nn} \spec{T_n}{E_n}$ and $R(\lambda,T)$ coincides with the restriction of $R(\lambda,T_n)$ to $E$, for all $n \in \nn$ and $\lambda \in \bigcap_{n \in \nn}\rho(T_n;E_n)$.
\item If $\cup_{n \in \nn} \sigma(T_n;E_n)\subseteq \overline{\sigma(T;E)}$, then
\[
\stsp{T}{E} = \overline{\spec{T}{E}}.
\]
\end{enumerate}
\end{lemma}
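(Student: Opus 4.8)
The plan is to use the compatibility condition (A) to transport the resolvents of the Banach steps $T_n$ down to a single operator on $E$, and then to obtain the equicontinuity required by the Waelbroeck spectrum from the local boundedness of Banach-space resolvents on the resolvent set.

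For (i) I would fix $\lambda\in\bigcap_{n}\rho(T_n;E_n)$ and first check that, whenever $m\geq n$, the operator $R(\lambda,T_n)$ leaves $E_m$ invariant and restricts there to $R(\lambda,T_m)$. Indeed, for $x\in E_{n+1}$ set $y:=R(\lambda,T_{n+1})x\in E_{n+1}$; since $T_n|_{E_{n+1}}=T_{n+1}$ we have $(\lambda I-T_n)y=(\lambda I-T_{n+1})y=x$, and applying $R(\lambda,T_n)$ gives $y=R(\lambda,T_n)x$, so $R(\lambda,T_n)(E_{n+1})\subseteq E_{n+1}$ and the two resolvents agree there; iterating yields the claim for all $m\geq n$. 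Consequently the operators $R(\lambda,T_n)$, $n\in\nn$, all restrict to one and the same linear map $S_\lambda\colon E\to E$. It is continuous, since $\norm{S_\lambda x}_n=\norm{R(\lambda,T_n)x}_n\leq\opnorm{R(\lambda,T_n)}\,\norm{x}_n$ for every $n$ and $x\in E$, and the topology of $E$ is generated by the norms $\norm{\cdot}_n$; and using $T_n|_E=T$ one checks $(\lambda I-T)S_\lambda x=x=S_\lambda(\lambda I-T)x$ for $x\in E$. Hence $\lambda\in\rho(T;E)$ with $R(\lambda,T)=R(\lambda,T_n)|_E$, so $\bigcap_{n}\rho(T_n;E_n)\subseteq\rho(T;E)$, and taking complements gives $\spec{T}{E}\subseteq\bigcup_n\spec{T_n}{E_n}$.

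For (ii), the inclusion $\overline{\spec{T}{E}}\subseteq\stsp{T}{E}$ is automatic because $\stsp{T}{E}$ is closed and contains $\spec{T}{E}$. For the reverse inclusion I would take $\lambda_0\notin\overline{\spec{T}{E}}$ and choose $r>0$ with $\overline{B(\lambda_0,r)}\subseteq\cn\setminus\overline{\spec{T}{E}}$. By the hypothesis $\bigcup_n\spec{T_n}{E_n}\subseteq\overline{\spec{T}{E}}$, this closed disk lies in $\bigcap_n\rho(T_n;E_n)$, so by (i) one has $R(\mu,T)=R(\mu,T_n)|_E$ for all $\mu\in B(\lambda_0,r)$ and all $n$. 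Since $\mu\mapsto R(\mu,T_n)$ is holomorphic, hence locally bounded, on $\rho(T_n;E_n)$ with values in the Banach space $\clo{E_n}$, and $\overline{B(\lambda_0,r)}$ is a compact subset of $\rho(T_n;E_n)$, the number $C_n:=\sup_{\mu\in B(\lambda_0,r)}\opnorm{R(\mu,T_n)}$ is finite; therefore $\norm{R(\mu,T)x}_n\leq C_n\norm{x}_n$ for all $x\in E$ and $\mu\in B(\lambda_0,r)$. This is precisely equicontinuity of $\{R(\mu,T):\mu\in B(\lambda_0,r)\}$ in $\clo{E}$, so $\lambda_0\in\rho^*(T;E)$ and hence $\stsp{T}{E}\subseteq\overline{\spec{T}{E}}$.

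The delicate point is the opening step of (i): one must make sure that condition (A) really forces $R(\lambda,T_n)$ to be compatible along the whole chain $(E_m)_m$ and to descend to a well-defined continuous operator on $E$. After that, the resolvent identities on $E$ and the local boundedness of $\mu\mapsto R(\mu,T_n)$ on compact subsets of $\rho(T_n;E_n)$ (which also follows directly from the Neumann series) are routine. It is also worth recording that no density or injectivity assumptions enter: only that the topology of $E$ is generated by the $\norm{\cdot}_n$ and that each $\lambda I-T_n$ is invertible in $\clo{E_n}$.
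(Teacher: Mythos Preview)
The paper does not supply its own proof of this lemma; it is quoted verbatim from \cite[Lemma~2.1]{ABR17} and used as a black box. Your argument is correct and is essentially the standard one: in (i) you show, via the compatibility $T_n|_{E_{n+1}}=T_{n+1}$, that the Banach resolvents $R(\lambda,T_n)$ are coherent along the chain and therefore glue to a continuous two-sided inverse of $\lambda I-T$ on $E$; in (ii) you combine part (i) with the hypothesis $\bigcup_n\spec{T_n}{E_n}\subseteq\overline{\spec{T}{E}}$ to place a closed disk about any $\lambda_0\notin\overline{\spec{T}{E}}$ inside every $\rho(T_n;E_n)$, and then extract the required equicontinuity from the uniform bounds $\sup_{\mu\in\overline{B(\lambda_0,r)}}\opnorm{R(\mu,T_n)}<\infty$. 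Nothing is missing; the only cosmetic point is that in (ii) you might state explicitly that $B(\lambda_0,r)\subseteq\rho(T;E)$ follows from (i), which is needed before invoking the definition of $\rho^*(T;E)$.
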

For $n \in \nn$, let us denote by $\ces_n$ the Cesàro operator acting on the Banach space $\wberg{p}{\alphapp{n}}$.
\begin{theorem}\label{main result}
	Let $1< p<\infty$. Then, for $0 < \alpha<\infty$, the following statements hold:
	\begin{enumerate}[label=\normalfont(\arabic*)]
	\item We have the inclusion 
	\[
	\left\{\frac{1}{m}: m \in \nn, \, m <\frac{2+\alpha}{p}\right\}
 \subset \ptsp{\ces}{\iberg{p}{\alpha}} \subset \left\{\frac{1}{m}: m \in \nn, \, m\leq\frac{2+\alpha}{p}\right\}
	\]
	\item $\spec{\ces}{\iberg{p}{\alpha}} =\{0\}\cup D_{\frac{2+\alpha}{p}}\cup\ptsp{\ces}{\iberg{p}{\alpha}}$.
	\item $\stsp{\ces}{\iberg{p}{\alpha}} =\overline{\spec{\ces}{\iberg{p}{\alpha}}}$.
	\end{enumerate}
\end{theorem}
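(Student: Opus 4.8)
The plan is to deduce $\spec{\ces}{\iberg{p}{\alpha}}$ from Persson's description of the Banach-space spectra (Proposition~\ref{dahlner result}) by feeding the representation $\iberg{p}{\alpha}=\projx{n}{\nn}\wberg{p}{\alphapp{n}}$ into the transference Lemma~\ref{union of spectra}; its hypotheses are in force because each $\ces_n$ is bounded and all the operators involved are restrictions of $\ces$ on $\holom$. Throughout set $r:=\tfrac{2+\alpha}{p}$ and $r_n:=\tfrac1p\big(2+\alpha+\tfrac1n\big)$, so $r_n\downarrow r$, and I keep in mind the intertwining $\iota_n\ces=\ces_n\iota_n$ together with the density of $\iota_n(\iberg{p}{\alpha})$ in $\wberg{p}{\alphapp{n}}$.

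I would treat (1) by classifying eigenfunctions. If $\ces f=\lambda f$ for some $0\neq f\in\holom$, then by \eqref{ices with mult} (equivalently \eqref{recover from ces}) $f$ solves $z(1-z)f'=(\tfrac1\lambda-1+z)f$ on $\udisk$, and separating variables forces $\lambda=\tfrac1m$ with $m\in\nn$ and $f$ a scalar multiple of $f_m(z):=z^{m-1}(1-z)^{-m}$. A routine estimate of $\dint\snorm{z}^{(m-1)p}\snorm{1-z}^{-mp}(1-\snorm{z})^{\mu}\,\difs{z}$ near $z=1$ shows $f_m\in\wberg{p}{\mu}$ iff $mp<\mu+2$; hence $f_m\in\wberg{p}{\alpha}\subset\iberg{p}{\alpha}$ when $m<r$, while $f_m\in\iberg{p}{\alpha}=\bigcap_{\mu>\alpha}\wberg{p}{\mu}$ iff $mp\leq\alpha+2$, i.e.\ $m\leq r$. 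This yields the two inclusions in (1); the second is in fact an equality, since when $r=m_0\in\nn$ the endpoint function $f_{m_0}$ still belongs to every $\wberg{p}{\mu}$, $\mu>\alpha$, so $\tfrac1{m_0}\in\ptsp{\ces}{\iberg{p}{\alpha}}$. I will use $\ptsp{\ces}{\iberg{p}{\alpha}}=\{\tfrac1m:m\in\nn,\ m\leq r\}$ below.

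For (2), the inclusion $\subseteq$ comes from Lemma~\ref{union of spectra}(i) and Proposition~\ref{dahlner result}: $\spec{\ces}{\iberg{p}{\alpha}}\subseteq\bigcup_n\spec{\ces_n}{\wberg{p}{\alphapp{n}}}=\bigcup_n\big(\overline D_{r_n}\cup\{\tfrac1m:m<r_n\}\big)$. Writing $\overline D_{r_n}=\{\lambda:r_n\snorm{\lambda}^2\leq\operatorname{Re}\lambda\}$ and using $r_n\downarrow r$, one checks $\bigcup_n\overline D_{r_n}=\{0\}\cup D_r$ and $\bigcup_n\{\tfrac1m:m<r_n\}\subseteq\{\tfrac1m:m\leq r\}=\ptsp{\ces}{\iberg{p}{\alpha}}$, whence $\spec{\ces}{\iberg{p}{\alpha}}\subseteq\{0\}\cup D_r\cup\ptsp{\ces}{\iberg{p}{\alpha}}$. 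For the reverse inclusion, $0\in\spec{\ces}{\iberg{p}{\alpha}}$ by Proposition~\ref{ces isom on ibergp}(1) and $\ptsp{\ces}{\iberg{p}{\alpha}}\subseteq\spec{\ces}{\iberg{p}{\alpha}}$ trivially; the hard part is $D_r\subseteq\spec{\ces}{\iberg{p}{\alpha}}$. Here is the argument I intend: fix $\lambda\in D_r=\bigcup_nD_{r_n}$, pick $n$ with $\lambda\in D_{r_n}$, and note that the eigenvalues $\tfrac1m$, $m<r_n$, are isolated points of $\spec{\ces_n}{\wberg{p}{\alphapp{n}}}$ lying outside $\overline D_{r_n}$ (since $\tfrac1m\in\overline D_{r_n}$ iff $m\geq r_n$), so $\lambda$ lies in the interior of $\spec{\ces_n}{\wberg{p}{\alphapp{n}}}$; by Proposition~\ref{dahlner result}(i), $\imsp{\ces_n}$ is then a closed subspace of codimension one in $\wberg{p}{\alphapp{n}}$, hence not dense. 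If $\lambda I-\ces$ were onto $\iberg{p}{\alpha}$, then $\imsp{\ces_n}\supseteq(\lambda I-\ces_n)(\iota_n(\iberg{p}{\alpha}))=\iota_n((\lambda I-\ces)(\iberg{p}{\alpha}))=\iota_n(\iberg{p}{\alpha})$, which is dense in $\wberg{p}{\alphapp{n}}$ — a contradiction. So $\lambda I-\ces$ is not invertible in $\clo{\iberg{p}{\alpha}}$, proving $D_r\subseteq\spec{\ces}{\iberg{p}{\alpha}}$ and hence (2).

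Statement (3) will follow from Lemma~\ref{union of spectra}(ii) once I check $\bigcup_n\spec{\ces_n}{\wberg{p}{\alphapp{n}}}\subseteq\overline{\spec{\ces}{\iberg{p}{\alpha}}}$. From the computation in (2) the left side equals $\{0\}\cup D_r\cup\bigcup_n\{\tfrac1m:m<r_n\}\subseteq\overline D_r\cup\{\tfrac1m:m\leq r\}$, while $\overline{\spec{\ces}{\iberg{p}{\alpha}}}=\overline D_r\cup\ptsp{\ces}{\iberg{p}{\alpha}}=\overline D_r\cup\{\tfrac1m:m\leq r\}$ (using $0\in\partial D_r$ and that $\ptsp{\ces}{\iberg{p}{\alpha}}$ is finite, hence closed); so the required inclusion holds and Lemma~\ref{union of spectra}(ii) gives $\stsp{\ces}{\iberg{p}{\alpha}}=\overline{\spec{\ces}{\iberg{p}{\alpha}}}$. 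The only genuinely delicate points in the whole argument are the boundary membership computations for $f_m$ (in particular the endpoint $mp=\alpha+2$) and the codimension-one/density step establishing $D_r\subseteq\spec{\ces}{\iberg{p}{\alpha}}$; everything else is bookkeeping with the nested disks $D_{r_n}$.
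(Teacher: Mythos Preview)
Your approach is the same as the paper's: feed the projective-limit representation into Lemma~\ref{union of spectra} and read off both bounds on the spectrum from Persson's description (Proposition~\ref{dahlner result}). Two differences in execution are worth noting. For (1) the paper argues abstractly that $\ptsp{\ces}{\iberg{p}{\alpha}}\subseteq\bigcap_{\mu>\alpha}\ptsp{\ces}{\wberg{p}{\mu}}$ and leaves the endpoint undecided; your explicit eigenfunction computation with $f_m(z)=z^{m-1}(1-z)^{-m}$ is sharper and correctly yields the equality $\ptsp{\ces}{\iberg{p}{\alpha}}=\{\tfrac1m:m\le r\}$. For the inclusion $D_r\subseteq\spec{\ces}{\iberg{p}{\alpha}}$ in (2), the paper additionally proves that $\imsp{\ces}$ is \emph{closed} in $\iberg{p}{\alpha}$ via a sequence argument exploiting injectivity of each $\lambda I-\ces_n$, and then that it is proper; you bypass closedness and go straight to non-surjectivity by the density contradiction at a single step $n$. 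Both are valid, yours is more economical.

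One slip to fix: in (2) you assert $\bigcup_n\{\tfrac1m:m<r_n\}\subseteq\{\tfrac1m:m\le r\}$, but since $r_n\downarrow r$ the left side equals $\{\tfrac1m:m<r_1\}$, and there can be integers $m$ with $r<m<r_1=r+\tfrac1p$ (take $p$ just above $1$ and $\alpha$ small). This does not harm the conclusion: any such $m>r$ satisfies $\tfrac1m\in D_r$ (since $\tfrac1m\in D_r\iff m>r$), so the union is still contained in $D_r\cup\{\tfrac1m:m\le r\}$ and your upper bound for $\spec{\ces}{\iberg{p}{\alpha}}$ stands. You handle this absorption correctly in (3).
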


\begin{proof}
\begin{enumerate}[wide, labelwidth=!, labelindent=0pt, label=\normalfont(\arabic*)]\setlength\itemsep{0.5em}
 \item One inclusion follows from Proposition~\ref{dahlner result}. For the other inclusion, take any $\lambda \in \ptsp{\ces}{\iberg{p}{\alpha}}$. Then, there exists $f \in \iberg{p}{\alpha}$ such that $\ces f=\lambda f$. Since $f \in \wberg{p}{\mu}$ for every $\mu>\alpha$, we have $\ces f=\lambda f$ in $\wberg{p}{\mu}$ as well. Then $\lambda \in \ptsp{\ces}{\wberg{p}{\mu}}$, for all $\mu>\alpha$. Hence $\lambda \in \cap_{\mu>\alpha} \ptsp{\ces}{\wberg{p}{\mu}}$. By Proposition~\ref{dahlner result}(ii) we obtain
 \begin{align*}
 	\ptsp{\ces}{\iberg{p}{\alpha}} & \subseteq \bigcap_{\mu>\alpha} \ptsp{\ces}{\wberg{p}{\mu}} \\
 	& = \bigcap_{\mu>\alpha} \left\{\frac{1}{m}\colon m \in \nn, m<\frac{2+\mu}{p} \right\} \\
 	& = \left\{\frac{1}{m}\colon m \in \nn, m \leq \frac{2+\alpha}{p} \right\}.
 \end{align*}
 \item By part (1) and Lemma~\ref{union of spectra}(i), we already know that
\[
\ptsp{\ces}{\iberg{p}{\alpha}} \subseteq \spec{\ces}{\iberg{p}{\alpha}} \subseteq \bigcup_{n \in \nn} \spec{\ces_n}{\wberg{p}{\alphapp{n}}}.
\]
Proposition~\ref{ces isom on ibergp} implies that $0 \in \spec{\ces}{\iberg{p}{\alpha}}$. By Proposition~\ref{dahlner result}, we obtain
\[
\spec{\ces}{\iberg{p}{\alpha}} \subseteq  \{0\}\cup \bigcup_{n \in \nn}\left(\left\{\frac{1}{m}: m \in \nn, m<\frac{2+\alphapp{n}}{p} \right\}\cup \overline{D}_{\frac{2+\alphapp{n}}{p}} \right).
\]
Then we clearly have
\[
\spec{\ces}{\iberg{p}{\alpha}} \subseteq  \{0\} \cup \left\{\frac{1}{m}: m\in \nn, m \leq \frac{2+\alpha}{p} \right\} \cup D_{\frac{2+\alpha}{p}}.
\]
\normalsize
So it remains to show that $D_{\frac{2+\alpha}{p}} \subseteq \spec{\ces}{\iberg{p}{\alpha}}$. Let us fix $\alpha>0$, and let $\lambda \in D_{\frac{2+\alpha}{p}}$. Let us set $m_\alpha:=\max\{m \in \nn: \frac{1}{m} \geq \frac{p}{2+\alpha}\}$. Then find $n_0 \in \nn$ such that $\frac{p}{2+\alphapp{n}} \geq \frac{1}{m_\alpha+1}$ and since $\lambda \in D_{m_\alpha+1}$ we have 
\begin{align*}
	\snorm{\lambda-\frac{p}{2(2+\alphapp{n})}} & \leq \snorm{\lambda-\frac{1}{2(m_\alpha+1)}} \\
	& < \frac{1}{2(m_\alpha+1)} \\
	& \leq \frac{p}{2(2+\alphapp{n})},
\end{align*}
for every $n\geq n_0$. Hence, $\lambda \in D_{\frac{2+\alphapp{n}}{p}}$, for all $n \geq n_0$. So by Proposition~\ref{dahlner result}(ii), $\lambda$ belongs to the interior of $\spec{\ces_n}{\wberg{p}{\alphapp{n}}}$, for each $n \geq n_0$.  We next show that the set $\iberg{p}{\alpha} \setminus\imsp{\ces}$ is a non-empty open set. Due to Proposition~\ref{dahlner result}(i), the argument is as in the proofs of \cite[Theorem 2.2]{ABR17} and \cite[Proposition 2.3]{ABR18-3}. We first take any sequence $(g_j)_{j \in \nn}\subseteq \imsp{\ces}$ such that $g_j \goesto{j} g \in \iberg{p}{\alpha}$. For every $j \in \nn$ let us select $f_j \in \iberg{p}{\alpha}$ such that $(\lambda I-\ces)f_j=g_j$. In particular, $f_j \subseteq \wberg{p}{\alphapp{n}}$, for every $n \in \nn$. Then we have $g_j \goesto{j} g \in \wberg{p}{\alphapp{n}}$. Since $\imsp{\ces_n}$ is closed in $\wberg{p}{\alphapp{n}}$, $g \in \imsp{\ces_n}$, for all $n \geq n_0$. Then there exists $h_n \in \wberg{p}{\alphapp{n}}$ such that $\imsp{\ces_n} h_n=g$. Moreover, for $n \geq n_0$ we have $\imsp{\ces_n} h_n=g=(\lambda I-\ces_{n+1})h_{n+1}$. Since the restriction of $\ces_n$ to $\wberg{p}{\alphapp{n+1}}$ coincides with $\ces_{n+1}$ and $\lambda I-\ces_n$ is injective, we have $h_n=h_{n+1}$, for all $n \geq n_0$. So $g \in \imsp{\ces}$, and hence $\imsp{\ces}$ is closed. Now it remains to show that $\imsp{\ces}$ is a proper subspace. Assume not, that is, suppose that $\imsp{\ces} =\iberg{p}{\alpha}$. Since $\iberg{p}{\alpha}$ is dense in $\wberg{p}{\alphapp{n}}$, for all $n \in \nn$,
\begin{equation}\label{huge inclusion}
\wberg{p}{\alphapp{n}} =\overline{\iberg{p}{\alpha}}=\overline{(\lambda I-\ces)(\iberg{p}{\alpha})}\subseteq (\lambda I-\ces_n)(\wberg{p}{\alphapp{n}}), 
\end{equation}
where all the closures are taken in $\wberg{p}{\alphapp{n}}$. However, this contradicts the fact that $\imsp{\ces_n}$ is a closed subspace of $\wberg{p}{\alphapp{n}}$. Hence $\lambda I-\ces$ is not surjective, so $\lambda \in \spec{\ces}{\iberg{p}{\alpha}}$. 
\item By part (2), we observe that
\[
\overline{\spec{\ces}{\iberg{p}{\alpha}}} = \{0\} \cup \overline{D}_{\frac{2+\alpha}{p}} \cup \left\{\frac{1}{m}: m\in \nn, m \leq \frac{2+\alpha}{p}\right\}.
\]
From Proposition~\ref{dahlner result}(ii), we deduce that
\[
\bigcup_{n \in \nn} \spec{\ces_n}{\wberg{p}{\alphapp{n}}} \subseteq \{0\} \cup \overline{D}_{\frac{2+\alpha}{p}} \cup \left\{\frac{1}{m}: m\in \nn, m \leq \frac{2+\alpha}{p}\right\}.
\]
So by Lemma~\ref{union of spectra}(ii), we get the desired result. 
 \end{enumerate}
\end{proof}
\subsection{Spectrum of $\ces$ on $\uberg{p}{\alpha}$}\label{subsection: spectrum on u}
Let us state an abstract spectral lemma which is needed for our next result.
\begin{lemma}\label{abstract result ii}\textup{\cite[Lemma 5.2]{ABR18-3}}
	Let $E=\ind_{n \in \nn}(E_n, \norm{\cdot}_n)$ be a Hausdorff inductive limit of Banach spaces. Let $T \in \clo{E}$ satisfy the following condition:
	 \begin{enumerate}[label=\normalfont(\Alph*)]
	 	\item For each $n \in \nn$, the restriction $T_n$ of $T$ to $E_n$ maps $E_n$ into itself and $T_n \in \clo{E_n}$.
	 \end{enumerate}
Then, the following properties are satisfied:
	 \begin{enumerate}[label=\normalfont(\roman*)]
	 	\item $\ptsp{T}{E}=\bigcup_{n \in \nn}\ptsp{T_n}{E_n}$.
	 	\item $\spec{T}{E}\subseteq \bigcap_{m \in \nn}(\bigcup_{n=m}^\infty \spec{T_n}{E_n}$. Moreover, if $\lambda \in \bigcap_{n=m}^\infty \rho(T_n;E_n)$ for some $m \in \nn$, then $R(\lambda,T_n)$ coincides with the restriction of $R(\lambda,T)$ to $E_n$, for every $n\geq m$.
	 	\item If $\bigcup_{n=m}^\infty \spec{T_n}{E_n}\subseteq \overline{\spec{T}{E}}$, for some $m \in \nn$, then 
	 	\[ \stsp{T}{E}=\overline{\spec{T}{E}}.\]
	 \end{enumerate}
\end{lemma}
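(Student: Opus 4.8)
The plan is to establish the three parts in turn, leaning at each step on the universal property of the inductive limit topology: a linear map out of $E$ is continuous precisely when its restriction to every $E_n$ is. Part (i) I would settle by a purely algebraic double inclusion. If $T_nx=\lambda x$ for some $0\neq x\in E_n$, then since $T_n$ is the restriction of $T$ the same vector shows $\lambda\in\ptsp{T}{E}$; conversely, an eigenvector $x\in E$ of $T$ lies in some $E_n$ because $E=\bigcup_n E_n$, and there $T_nx=Tx=\lambda x$, so $\lambda\in\ptsp{T_n}{E_n}$. Only hypothesis (A) is used.

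For part (ii) it is equivalent to prove $\bigcup_m\bigcap_{n\ge m}\rho(T_n;E_n)\subseteq\rho(T;E)$, so I would fix $\lambda$ with $\lambda\in\rho(T_n;E_n)$ for all $n\ge m$. Injectivity of $\lambda I-T$ on $E$ follows as in (i): a kernel vector lies in some $E_n$ with $n\ge m$, where $\lambda I-T_n$ is injective. For surjectivity, given $y\in E$ pick $n\ge m$ with $y\in E_n$ and set $x:=R(\lambda,T_n)y\in E_n\subseteq E$, so $(\lambda I-T)x=y$. By injectivity the vectors $R(\lambda,T_n)y$ agree for all admissible $n$, hence glue to a linear $S\colon E\to E$ with $S|_{E_n}=R(\lambda,T_n)$ for $n\ge m$ and $(\lambda I-T)S=S(\lambda I-T)=I$ --- this already yields the ``moreover'' clause with $R(\lambda,T)=S$. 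Continuity of $S$ is where the inductive limit is used: it suffices that each restriction $S|_{E_n}\colon E_n\to E$ be continuous, and for $n\ge m$ this is the bounded operator $R(\lambda,T_n)\colon E_n\to E_n$ (bounded since $E_n$ is Banach and $\lambda\in\rho(T_n;E_n)$) composed with the continuous inclusion $E_n\hookrightarrow E$; the finitely many indices $n<m$ are handled by first factoring $E_n\hookrightarrow E_m$. Then $S=(\lambda I-T)^{-1}\in\clo{E}$ and $\lambda\in\rho(T;E)$.

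For part (iii), the inclusion $\overline{\spec{T}{E}}\subseteq\stsp{T}{E}$ is free since $\spec{T}{E}\subseteq\stsp{T}{E}$ and $\stsp{T}{E}$ is closed. For the reverse I would take $\lambda\notin\overline{\spec{T}{E}}$, choose $\delta_0>0$ with $B(\lambda,\delta_0)\cap\overline{\spec{T}{E}}=\varnothing$, and note that by the hypothesis of (iii), $B(\lambda,\delta_0)$ then misses $\spec{T_n}{E_n}$ for every $n\ge m$; by part (ii) this gives $B(\lambda,\delta_0)\subseteq\rho(T;E)$ with $R(\mu,T)|_{E_n}=R(\mu,T_n)$ for $n\ge m$. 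Fixing $0<\delta<\delta_0$, in each Banach space $E_n$, $n\ge m$, the resolvent $\mu\mapsto R(\mu,T_n)$ is analytic on $B(\lambda,\delta_0)$, hence norm-bounded on the compact disc $\overline{B(\lambda,\delta)}$; so for any $x\in E$ (choosing $n\ge m$ with $x\in E_n$) the orbit $\{R(\mu,T)x:\mu\in\overline{B(\lambda,\delta)}\}$ lies in a norm-ball of $E_n$ and is therefore bounded in $E$. Since an (LB)-space is barrelled, the Banach--Steinhaus theorem upgrades this pointwise boundedness to equicontinuity of $\{R(\mu,T):\mu\in\overline{B(\lambda,\delta)}\}$ in $\clo{E}$, so $\lambda\in\rho^*(T;E)$. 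Hence $\stsp{T}{E}\subseteq\overline{\spec{T}{E}}$, and equality follows.

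I expect the real obstacle to be the equicontinuity in (iii): one has to see that only pointwise control of $\mu\mapsto R(\mu,T)$ is needed --- which is exactly where barrelledness of (LB)-spaces, via Banach--Steinhaus, enters --- and one has to produce that control from the analyticity of the stepwise resolvents on the common disc inside each Banach space $E_n$. A minor but genuine subtlety in (ii) is that continuity of $(\lambda I-T)^{-1}$ must be deduced from the inductive limit's universal property, not assumed, with the indices $n<m$ requiring the extra factoring step.
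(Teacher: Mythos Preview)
The paper does not prove this lemma: it is quoted verbatim from \cite[Lemma 5.2]{ABR18-3} and used as a black box, so there is no in-paper argument to compare against. Your proof is correct and is essentially the standard one; in particular, your treatment of part~(iii) --- reducing equicontinuity of $\{R(\mu,T)\}$ to pointwise boundedness via barrelledness of the (LB)-space and Banach--Steinhaus, with the pointwise bound coming from norm-boundedness of the analytic stepwise resolvents $\mu\mapsto R(\mu,T_n)$ on a compact disc --- is exactly the mechanism the paper itself spells out in the concrete setting when proving Theorem~\ref{main result ii}(3), so your abstract argument and the paper's applied one are in perfect alignment.
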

In the light of Lemma~\ref{abstract result ii}, the following result will follow by using the arguments in \cite[Propositions 2.5 - 2.9]{ABR18-3}, adapted to our setting.
\begin{theorem}\label{main result ii}
	Let $1<p<\infty$ be fixed, and let $0<\alpha<\infty$. Then, the following statements hold:
	\begin{enumerate}[label=\normalfont(\arabic*)]
		\item $\ptsp{\ces}{\uberg{p}{\alpha}} = \{\frac{1}{m}:m \in \nn, m<\frac{2+\alpha}{p}\}$.
		\item $\spec{\ces}{\uberg{p}{\alpha}} = \ptsp{\ces}{\uberg{p}{\alpha}}\cup \overline{D}_{\frac{2+\alpha}{p}}$.
		\item $\stsp{\ces}{\uberg{p}{\alpha}}=\spec{\ces}{\uberg{p}{\alpha}}$.
	\end{enumerate}
\end{theorem}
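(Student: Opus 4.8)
The plan is to mirror the structure of the proof of Theorem~\ref{main result}, but now using Lemma~\ref{abstract result ii} for inductive limits instead of Lemma~\ref{union of spectra}. Recall that $\uberg{p}{\alpha}=\ind_{n\in\nn}\wberg{p}{\alphapm{n}}$, and that by Andersen's theorem the Cesàro operator restricts to a bounded operator $\ces_n$ on each step $\wberg{p}{\alphapm{n}}$; thus hypothesis (A) of Lemma~\ref{abstract result ii} holds. For part (1), I would apply Lemma~\ref{abstract result ii}(i) together with Proposition~\ref{dahlner result}(ii): since $\ptsp{\ces}{\uberg{p}{\alpha}}=\bigcup_{n\in\nn}\ptsp{\ces_n}{\wberg{p}{\alphapm{n}}}=\bigcup_{n}\{\tfrac1m:m\in\nn,\ m<\tfrac{2+\alpha-1/n}{p}\}$, and since the quantities $\tfrac{2+\alpha-1/n}{p}$ increase strictly to $\tfrac{2+\alpha}{p}$ as $n\to\infty$, the union is exactly $\{\tfrac1m:m\in\nn,\ m<\tfrac{2+\alpha}{p}\}$. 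The key difference from the Fréchet case is that here, because we take a union rather than an intersection, no boundary point $\tfrac1m=\tfrac{2+\alpha}{p}$ can appear, so the point spectrum is clean.

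For part (2), one inclusion comes from Lemma~\ref{abstract result ii}(ii) combined with Proposition~\ref{dahlner result}(iii): $\spec{\ces}{\uberg{p}{\alpha}}\subseteq\bigcap_m\bigcup_{n=m}^\infty\spec{\ces_n}{\wberg{p}{\alphapm{n}}}\subseteq\bigcap_m\bigcup_{n=m}^\infty\big(\overline{D}_{\frac{2+\alphapm{n}}{p}}\cup\ptsp{\ces_n}{\wberg{p}{\alphapm{n}}}\big)$. Since the discs $\overline{D}_{\frac{2+\alphapm{n}}{p}}$ are nested and shrink (as $n\to\infty$) to $\overline{D}_{\frac{2+\alpha}{p}}$ from outside, and the eigenvalue sets are eventually constant on any set bounded away from $0$, this intersection collapses to $\overline{D}_{\frac{2+\alpha}{p}}\cup\ptsp{\ces}{\uberg{p}{\alpha}}$; note $0\notin\spec{\ces}{\uberg{p}{\alpha}}$ is \emph{not} claimed—in fact $0\in\overline{D}_{\frac{2+\alpha}{p}}$, so $0$ is automatically in the spectrum and there is no contradiction with Proposition~\ref{ces isom on ibergp}(2), which only says $\ces$ is not an isomorphism. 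For the reverse inclusion $\overline{D}_{\frac{2+\alpha}{p}}\cup\ptsp{\ces}{\uberg{p}{\alpha}}\subseteq\spec{\ces}{\uberg{p}{\alpha}}$: the eigenvalues are handled by part (1), and for $\lambda\in D_{\frac{2+\alpha}{p}}$ (the open disc) I would show $\lambda I-\ces$ fails to be surjective by an argument dual to the Fréchet case—fixing $\lambda$, for all large $n$ we have $\lambda\in D_{\frac{2+\alphapm{n}}{p}}$, hence $\lambda$ is interior to $\spec{\ces_n}{\wberg{p}{\alphapm{n}}}$ and $\imsp{\ces_n}$ is a closed proper (one-codimensional) subspace of $\wberg{p}{\alphapm{n}}$ by Proposition~\ref{dahlner result}(i); if $\lambda I-\ces$ were surjective on $\uberg{p}{\alpha}$, regularity of the (LB)-space would force $\lambda I-\ces_n$ surjective on some step, a contradiction. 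The boundary circle $\partial D_{\frac{2+\alpha}{p}}\setminus\{0\}$ is then obtained by closedness of the spectrum—or rather, one argues $\spec{\ces}{\uberg{p}{\alpha}}$ contains $\overline{D}_{\frac{2+\alpha}{p}}$ directly since each boundary point is a limit of interior points and the spectrum of the complement (the resolvent set on an (LB)-space need not be open, but here one checks directly).

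For part (3), I would invoke Lemma~\ref{abstract result ii}(iii): by parts (1)–(2) the set $\spec{\ces}{\uberg{p}{\alpha}}=\ptsp{\ces}{\uberg{p}{\alpha}}\cup\overline{D}_{\frac{2+\alpha}{p}}$ is already \emph{closed} (the eigenvalues $\tfrac1m$ with $m<\tfrac{2+\alpha}{p}$ are finitely many, and $\overline{D}_{\frac{2+\alpha}{p}}$ is compact), and moreover $\bigcup_{n=m}^\infty\spec{\ces_n}{\wberg{p}{\alphapm{n}}}$ is contained in this closed set for $m$ large enough (again using that $\overline{D}_{\frac{2+\alphapm{n}}{p}}\supseteq\overline{D}_{\frac{2+\alpha}{p}}$, so one must be slightly careful: in fact the steps have \emph{larger} discs, so I should instead use that for any fixed $m$, $\spec{\ces_m}{\wberg{p}{\alphapm{m}}}$ is not contained in $\spec{\ces}{\uberg{p}{\alpha}}$, which means the hypothesis of Lemma~\ref{abstract result ii}(iii) needs the inductive-limit version: $\bigcap_m\bigcup_{n\geq m}\spec{\ces_n}{\wberg{p}{\alphapm{n}}}\subseteq\overline{\spec{\ces}{\uberg{p}{\alpha}}}$, which does hold by the nesting computation in part (2)). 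The main obstacle I anticipate is precisely this point: unlike the Fréchet case where the steps shrink toward the limit, here the Banach steps are \emph{larger} than the (LB)-space, so the discs $\overline{D}_{\frac{2+\alphapm{n}}{p}}$ are larger than $\overline{D}_{\frac{2+\alpha}{p}}$; one must take the intersection over tails $\bigcap_m\bigcup_{n\geq m}$ to recover the correct smaller disc, and verify carefully that this intersection indeed equals $\overline{D}_{\frac{2+\alpha}{p}}$ (plus eigenvalues) rather than something larger—this is where the strict monotonicity $\alphapm{n}\uparrow\alpha$ is essential and must be used precisely when comparing the radii $\tfrac{2+\alphapm{n}}{p}\downarrow\tfrac{2+\alpha}{p}$. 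The surjectivity-failure argument on the open disc, relying on regularity of the (LB)-space $\uberg{p}{\alpha}$ (already noted in the introduction) together with Proposition~\ref{dahlner result}(i), is the other delicate step, but it follows the template of \cite[Propositions 2.5--2.9]{ABR18-3}.
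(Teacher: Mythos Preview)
Your outline for part~(1) and the upper bound in part~(2) matches the paper. The genuine gaps are in the lower bound of part~(2) and in part~(3).

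\textbf{Part (2), open disc.} Your argument that ``regularity of the (LB)-space would force $\lambda I-\ces_n$ surjective on some step'' is not a valid inference: regularity only says bounded sets lift to steps, not that surjective maps descend to surjective step maps. The paper avoids this entirely by working not in a step $\wberg{p}{\alphapm{n}}$ but in the \emph{endpoint} space $\wberg{p}{\alpha}$ itself. Since $\uberg{p}{\alpha}$ is dense in $\wberg{p}{\alpha}$ and Proposition~\ref{dahlner result}(i) says $\imsp{\ces}$ is closed of codimension one in $\wberg{p}{\alpha}$, the chain
\[
\wberg{p}{\alpha}=\overline{\uberg{p}{\alpha}}=\overline{(\lambda I-\ces)(\uberg{p}{\alpha})}\subseteq(\lambda I-\ces)(\wberg{p}{\alpha})
\]
(closures in $\wberg{p}{\alpha}$) gives an immediate contradiction.

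\textbf{Part (2), boundary circle.} You have no argument here; ``closedness of the spectrum'' is unavailable since the resolvent set in a non-Banach space need not be open, as you yourself note. The paper handles the circle $\operatorname{Re}(1/\lambda)=\tfrac{2+\alpha}{p}$ by invoking \cite[Proposition~4]{Per08}, which says that for such $\lambda$ the constant function $\boldsymbol{1}$ does not lie in $(\lambda I-\ces)(\wberg{p}{\alpha})$. Since $\boldsymbol{1}\in\uberg{p}{\alpha}$, the map $\lambda I-\ces$ is not surjective on $\uberg{p}{\alpha}$.

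\textbf{Part (3).} You correctly identify that the hypothesis of Lemma~\ref{abstract result ii}(iii) fails: for every fixed $m$, the union $\bigcup_{n\geq m}\spec{\ces_n}{\wberg{p}{\alphapm{n}}}$ contains the strictly larger disc $\overline{D}_{\frac{2+\alphapm{m}}{p}}$, which is \emph{not} contained in $\overline{\spec{\ces}{\uberg{p}{\alpha}}}=\overline{D}_{\frac{2+\alpha}{p}}\cup\ptsp{\ces}{\uberg{p}{\alpha}}$. Your attempted reinterpretation (replacing the union by $\bigcap_m\bigcup_{n\geq m}$) is not what the lemma asserts, so this route is blocked. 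The paper instead argues directly: given $\lambda\notin\spec{\ces}{\uberg{p}{\alpha}}$, compactness of the spectrum and the Hausdorff convergence $\spec{\ces_n}{\wberg{p}{\alphapm{n}}}\to\spec{\ces}{\uberg{p}{\alpha}}$ yield $r>0$ and $n_0$ with $\overline{B(\lambda,r)}\subseteq\rho(\ces_n;\wberg{p}{\alphapm{n}})$ for all $n\geq n_0$. Then for each fixed $f\in\wberg{p}{\alphapm{n}}$ the set $\{(\mu I-\ces_n)^{-1}f:\mu\in\overline{B(\lambda,r)}\}$ is bounded in $\wberg{p}{\alphapm{n}}$, hence in $\uberg{p}{\alpha}$; since $\uberg{p}{\alpha}$ is a (DFS)-space and therefore barrelled, the Banach--Steinhaus theorem gives equicontinuity of $\{R(\mu,\ces):\mu\in\overline{B(\lambda,r)}\}$ in $\clo{\uberg{p}{\alpha}}$, whence $\lambda\in\rho^*(\ces;\uberg{p}{\alpha})$.
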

\begin{proof}
	\begin{enumerate}[wide, labelwidth=!, labelindent=0pt, label=\normalfont(\arabic*)]\setlength\itemsep{0.5em}
	\item By Lemma~\ref{abstract result ii}(i), we know that
		 \begin{align*}
		 \ptsp{\ces}{\uberg{p}{\alpha}} &= \bigcup_{n \in \nn} \ptsp{\ces_n}{\wberg{p}{\alphapm{n}}}\\
		 & = \bigcup_{n\in \nn} \left\{\frac{1}{m}:m\in \nn, m<\frac{2+\alphapm{n}}{p} \right\} \\
		 & = \left\{\frac{1}{m}:m\in \nn, m<\frac{2+\alpha}{p}\right\}.
		\end{align*}
	\item If we apply Lemma~\ref{abstract result ii}(ii) we obtain
	\[
		\spec{\ces}{\uberg{p}{\alpha}} \subseteq \bigcap_{m \in \nn, m>\frac{1}{\alpha}}\left(\bigcup_{n=m}^\infty \spec{\ces_n}{\wberg{p}{\alphapm{n}}} \right).
	\]
	On the other hand, by Proposition~\ref{dahlner result}(ii) we know that for every $n \in \nn$ satisfying $n > \frac{1}{\alpha}$
	\[
	\spec{\ces_n}{\wberg{p}{\alphapm{n}}}=\left\{\frac{1}{m}:m\in \nn, m<\frac{2+\alphapm{n}}{p}\right\}\cup D_{\frac{2+\alphapm{n}}{p}}
	\]
	Since for each $n,m \in \nn$ with $n \geq m>\frac{1}{\alpha}$ one has
	\[
	D_{\frac{2+\alphapm{n}}{p}} \subseteq D_{\frac{2+\alphapm{m}}{p}},
	\]
	it follows
	\[
	\spec{\ces}{\uberg{p}{\alpha}} \subseteq \left\{\frac{1}{m}:m\in \nn, m<\frac{2+\alpha}{p} \right\} \cup \overline{D}_{\frac{2+\alpha}{p}}.
	\]
	For the other inclusion, first let us notice that by part (1), 
	\[
	\left\{\frac{1}{m}:m\in \nn, m<\frac{2+\alpha}{p}\right\} \subseteq \spec{\ces}{\uberg{p}{\alpha}}.
	\]
	Now let us assume that there exists $\lambda \in \cn$ satisfying $\snorm{\lambda-\frac{p}{2(2+\alpha)}}<\frac{p}{2(2+\alpha)}$, but $\lambda \notin \spec{\ces}{\uberg{p}{\alpha}}$. Then, $(\lambda I-\ces)(\uberg{p}{\alpha})=\uberg{p}{\alpha}$. However, Proposition~\ref{dahlner result}(i) implies that $\imsp{\ces}$ is a one-dimensional closed subspace of $\wberg{p}{\alpha}$. Since $\uberg{p}{\alpha}$ is dense in $\wberg{p}{\alpha}$, we have
	\[
	\wberg{p}{\alpha} = \overline{\uberg{p}{\alpha}} = \overline{(\lambda I-\ces)(\uberg{p}{\alpha})} \subseteq (\lambda I-\ces)(\wberg{p}{\alpha}),
	\]
	closures taken in $\wberg{p}{\alpha}$. Then, $\imsp{\ces}$ is dense in $\wberg{p}{\alpha}$. Contradiction. Therefore $\lambda \in \spec{\ces}{\uberg{p}{\alpha}}$. Now it remains to show that
	\[
	\left\{\lambda\in\cn:\snorm{\lambda-\frac{p}{2(2+\alpha)}}=\frac{p}{2(2+\alpha)} \right\} \subseteq \spec{\ces}{\uberg{p}{\alpha}}.
	\]
	Fix $\lambda\in \cn$ such that $\snorm{\lambda-\frac{p}{2(2+\alpha)}}=\frac{p}{2(2+\alpha)}$, that is, $\operatorname{Re}(\frac{1}{\lambda})= \frac{2+\alpha}{p}$. Then, by \cite[Proposition 4]{Per08}, constant functions do not belong to $(\lambda I-\ces)(\wberg{p}{\alpha})$. For instance, take $\boldsymbol{1} \in \uberg{p}{\alpha} \subseteq \wberg{p}{\alpha}$ so that $\boldsymbol{1}\notin (\lambda I-\ces)$ and hence $(\lambda I-\ces)\colon\uberg{p}{\alpha}\to\uberg{p}{\alpha}$ fails to be surjective. Therefore, $\lambda \in \spec{\ces}{\uberg{p}{\alpha}}$, and this completes the proof.
	\item By definition, it is always true to say $\spec{\ces}{\uberg{p}{\alpha}}\subseteq \stsp{\ces}{\uberg{p}{\alpha}}$. For the reverse inclusion, let us take $\lambda \notin \spec{\ces}{\uberg{p}{\alpha}}$ and show $\lambda \notin \stsp{\ces}{\uberg{p}{\alpha}}$. By part (2), the set $\spec{\ces}{\uberg{p}{\alpha}}$ is compact. So there exist $r>0$ and $n_0 \in \nn$ with $n_0>\frac{1}{\alpha}$ such that 	$\overline{B(\lambda,r)}\cap \spec{\ces}{\wberg{p}{\alphapm{n_0}}}=\varnothing$. By Proposition~\ref{dahlner result}(iii), we have $\overline{B(\lambda,r)} \cap \spec{\ces_n}{\wberg{p}{\alphapm{n}}}=\varnothing$, for every $n\geq n_0$. That is, $\overline{B(\lambda,r)}\subseteq \rho(\ces_n;\wberg{p}{\alphapm{n}})$, for every $n\geq n_0$. Hence the set $\{(\mu I-\ces_n)^{-1}\colon\wberg{p}{\alphapm{n}}\to\wberg{p}{\alphapm{n}}|\, \mu \in \overline{B(\lambda,r)}\}$ is equicontinuous, for every $n \geq n_0$. Now we show that the set $\{(\mu I-\ces)^{-1}\colon\uberg{p}{\alpha}\to\uberg{p}{\alpha}|\,\mu \in \overline{B(\lambda,r)}\}$ is equicontinuous in $\clo{\uberg{p}{\alpha}}$. As we shall show that $\uberg{p}{\alpha}$ is a (DFS)-space (see Corollary~\ref{schwartz}), it is barrelled. Hence, by means of Banach-Steinhaus Theorem, it suffices to show that the set
	 \[
	 \{(\mu I-\ces)^{-1}f:\mu\in\overline{B(\lambda,r)}\}
	\]
	is bounded in $\uberg{p}{\alpha}$. This will guarantee $\lambda \notin \stsp{\ces}{\uberg{p}{\alpha}}$. Let us fix $f \in \wberg{p}{\alphapm{n}}$, for some $n\geq n_0$. So $\{(\mu I-\ces_n)^{-1}f\colon \mu \in \overline{B(\lambda,r)}\}$ is a bounded set in $\wberg{p}{\alphapm{n}}$ and hence also in $\uberg{p}{\alpha}$. Since $(\mu I-\ces)^{-1}|_{\wberg{p}{\alphapm{n}}}=(\mu I-\ces_n)^{-1}$ for $\mu \in \overline{B(\lambda,r)}$, we are done. 
	\end{enumerate}
\end{proof}

\subsection{Other properties of $\ces$ on $\iberg{p}{\alpha}$ and $\uberg{p}{\alpha}$}
The following result follows from Section~\ref{subsection: spectrum on i} and Section~\ref{subsection: spectrum on u}.
\begin{proposition}\label{compactness of ces}
	Let $1<p<\infty$ be fixed, and let $0<\alpha<\infty$. Then, the Cesàro operator $\ces$ fails to be compact on $\iberg{p}{\alpha}$ and $\uberg{p}{\alpha}$.
\end{proposition}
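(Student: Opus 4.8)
The plan is to argue by contradiction, exploiting the standard fact that a compact operator on a locally convex Hausdorff space has a spectrum which is at most a countable set whose only possible accumulation point is $0$. Concretely, if $\ces$ were compact on $\iberg{p}{\alpha}$ (resp. on $\uberg{p}{\alpha}$), then $\spec{\ces}{\iberg{p}{\alpha}}$ (resp. $\spec{\ces}{\uberg{p}{\alpha}}$) would have to consist of isolated points together with $0$. But Theorem~\ref{main result}(2) tells us that $\spec{\ces}{\iberg{p}{\alpha}} = \{0\}\cup D_{\frac{2+\alpha}{p}}\cup\ptsp{\ces}{\iberg{p}{\alpha}}$ contains the non-degenerate open disk $D_{\frac{2+\alpha}{p}}$, and Theorem~\ref{main result ii}(2) tells us that $\spec{\ces}{\uberg{p}{\alpha}}$ contains the closed disk $\overline{D}_{\frac{2+\alpha}{p}}$; since $0<\alpha<\infty$, these disks have strictly positive radius $\frac{p}{2(2+\alpha)}>0$, hence uncountably many points, which is incompatible with compactness. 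This yields the contradiction.

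The key steps, in order, are as follows. First I would record the spectral description: by Theorem~\ref{main result}(2) the open disk $D_{\frac{2+\alpha}{p}}$ is contained in $\spec{\ces}{\iberg{p}{\alpha}}$, and by Theorem~\ref{main result ii}(2) the closed disk $\overline{D}_{\frac{2+\alpha}{p}}$ is contained in $\spec{\ces}{\uberg{p}{\alpha}}$. Second, I would invoke the general Riesz–Schauder theory for compact operators in the locally convex setting: if $T\in\clo{X}$ is compact on a locally convex Hausdorff space $X$, then $\spec{T}{X}\setminus\{0\}$ consists of eigenvalues of finite multiplicity and has no accumulation point other than possibly $0$; in particular $\spec{T}{X}$ is countable. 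Third, I would observe that a disk of radius $\frac{p}{2(2+\alpha)}$ is uncountable (indeed it has nonempty interior), so $\spec{\ces}{\iberg{p}{\alpha}}$ and $\spec{\ces}{\uberg{p}{\alpha}}$ are both uncountable. Combining the second and third steps contradicts the assumption of compactness, finishing both cases simultaneously.

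The main obstacle is a bookkeeping one rather than a deep one: I need to make sure I am citing the correct form of Riesz–Schauder theory valid for continuous linear operators on arbitrary locally convex Hausdorff spaces (not just Fréchet or Banach spaces), since $\uberg{p}{\alpha}$ is an (LB)-space. The appropriate reference is the classical theory of compact operators on locally convex spaces (e.g. the results surveyed in Grothendieck's work, or the spectral-theoretic treatments in the Albanese–Bonet–Ricker line of papers such as \cite{ABR16,ABR17,ABR18-3}), which guarantees that the nonzero spectrum of a compact operator is a discrete set of eigenvalues. An alternative and perhaps cleaner route, avoiding any subtlety about which notion of "spectrum" is used, is to work with the Waelbroeck spectrum: from Theorem~\ref{main result}(3) and Theorem~\ref{main result ii}(3) one has $\stsp{\ces}{\iberg{p}{\alpha}}=\overline{D}_{\frac{2+\alpha}{p}}\cup\{0\}\cup\{\tfrac1m:\ldots\}$ and likewise $\stsp{\ces}{\uberg{p}{\alpha}}=\spec{\ces}{\uberg{p}{\alpha}}$, both again containing a nondegenerate closed disk; since compactness of $\ces$ would force the Waelbroeck spectrum to be countable as well, the contradiction persists. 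Either way the proof is short, and no new estimates are needed beyond what Sections~\ref{subsection: spectrum on i} and~\ref{subsection: spectrum on u} already provide.
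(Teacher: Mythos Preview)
Your proposal is correct and follows essentially the same approach as the paper: both argue by contradiction, invoking Riesz--Schauder theory for compact operators on locally convex Hausdorff spaces and comparing with the spectra computed in Theorems~\ref{main result} and~\ref{main result ii}. The only cosmetic difference is that the paper splits the two cases, using that $\spec{\ces}{\iberg{p}{\alpha}}$ fails to be compact as a subset of $\cn$ (citing \cite[Theorem 9.10.2(4)]{Edw65} and \cite[Theorem 2.4]{Gro73}) and that $\spec{\ces}{\uberg{p}{\alpha}}$, while compact, has non-isolated nonzero points; your argument handles both cases at once via uncountability of the disk, which is an equally valid and slightly more streamlined way to reach the same contradiction.
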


\begin{proof}
	We make use of the results in \cite[Theorem 9.10.2(4)]{Edw65} and \cite[Theorem 2.4]{Gro73} stating that a compact operator $T\colon X \to X$ on a Hausdorff locally convex space $X$ necessarily has a spectrum $\spec{T}{X}$ which is compact as a set in $\cn$ and every element of $\spec{T}{X}$ except for the origin is isolated. We see in Theorem~\ref{main result} $\ces$ has a non-compact spectrum on $\iberg{p}{\alpha}$. Although it has a compact spectrum on $\uberg{p}{\alpha}$ as shown in Theorem~\ref{main result ii}, the points in the spectrum are not isolated. 
\end{proof}

We conclude this section with some remarks on the dynamical properties of the Cesàro operator $\ces$ on $\iberg{p}{\alpha}$. A Fréchet space operator $T \in \clo{X}$, where $X$ is separable, is called \textit{hypercyclic} if there exists $x \in X$ such that the orbit $\{T^kx:k \in \nn_0\}$ is dense in $X$. If, for some $z \in X$, the projective orbit $\{\lambda T^kz:\lambda \in \cn, k \in \nn_0\}$ is dense in $X$, then $T$ is called \textit{supercyclic}. Clearly, if $T$ is hypercyclic then $T$ is supercyclic.

\begin{proposition}
	Let $1<p<\infty$, and $0<\alpha<\infty$. Then, both $\ces\colon\iberg{p}{\alpha}\to \iberg{p}{\alpha}$ and $\ces\colon\uberg{p}{\alpha}\to\uberg{p}{\alpha}$ fail to be supercyclic. In particular, they are not hypercyclic.
\end{proposition}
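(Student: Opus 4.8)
The plan is to exploit the spectral information already obtained in Theorems~\ref{main result} and~\ref{main result ii} via the standard obstruction to supercyclicity in terms of the point spectrum of the adjoint. Recall that if a continuous linear operator $T$ on a separable locally convex space $X$ is supercyclic, then the point spectrum $\ptsp{T'}{X'}$ of the transpose operator contains at most one point (this is the locally convex analogue of the classical Banach-space result; see the versions in the work of Albanese--Bonet--Ricker cited in the excerpt, or Bès--Peris). So it suffices to exhibit at least two distinct eigenvalues of $\ces'$ acting on the dual of $\iberg{p}{\alpha}$, and likewise on the dual of $\uberg{p}{\alpha}$.

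First I would identify enough eigenvectors of the transpose. On $\holom$ the transpose of $\ces$ with respect to the duality given by Taylor coefficients acts on sequence space, and the point spectrum of $\ces$ itself on $\holom$ is known to be large; more to the point, for the weighted Bergman \emph{Banach} spaces $\wberg{p}{\alpha}$ one has, by duality with $\wberg{q}{\alpha}$ ($1/p+1/q=1$) together with Proposition~\ref{dahlner result}, that $\ptsp{\ces'}{(\wberg{p}{\alpha})'}$ contains the whole disk $D_{\frac{2+\alpha}{p}}$ — indeed in Proposition~\ref{dahlner result}(iii) the spectrum is this closed disk together with the point-spectrum points, and the interior consists of eigenvalues of the adjoint because for $\lambda$ in the interior $\lambda I-\ces$ has closed range of codimension one by Proposition~\ref{dahlner result}(i). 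Passing to the Fréchet space $\iberg{p}{\alpha}=\projx{n}{\nn}\wberg{p}{\alphapp{n}}$, its dual is the corresponding inductive limit of the duals, and an eigenvector of $\ces_n'$ on $(\wberg{p}{\alphapp{n}})'$ for an eigenvalue $\lambda$ restricts to an eigenvector of $\ces'$ on $\iberg{p}{\alpha}{}'$ with the same eigenvalue. Since $D_{\frac{2+\alpha}{p}}\subseteq D_{\frac{2+\alphapp{n}}{p}}$ only fails in the wrong direction, I instead use that every $\lambda$ in the nonempty open set $D_{\frac{2+\alpha}{p}}$ lies, for $n$ large, in the interior of $\spec{\ces_n}{\wberg{p}{\alphapp{n}}}$ (this is exactly the computation carried out in the proof of Theorem~\ref{main result}(2)), hence is an eigenvalue of $\ces_n'$, hence of $\ces'$ on $\iberg{p}{\alpha}{}'$. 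The open disk $D_{\frac{2+\alpha}{p}}$ certainly contains at least two points, so $\ptsp{\ces'}{\iberg{p}{\alpha}{}'}$ is infinite and $\ces$ cannot be supercyclic on $\iberg{p}{\alpha}$; not being supercyclic it is a fortiori not hypercyclic. For $\uberg{p}{\alpha}$ the same argument runs with the roles adjusted: $\uberg{p}{\alpha}=\indx{n}{\nn}\wberg{p}{\alphapm{n}}$, its dual is a projective limit of the duals $(\wberg{p}{\alphapm{n}})'$, and by Theorem~\ref{main result ii}(2) the spectrum of $\ces$ on $\uberg{p}{\alpha}$ contains the closed disk $\overline{D}_{\frac{2+\alpha}{p}}$; for $\lambda$ in the open disk $D_{\frac{2+\alpha}{p}}$ one has $\lambda\in D_{\frac{2+\alphapm{n}}{p}}$ for every $n$ with $n>1/\alpha$, so $\lambda$ is in the interior of each $\spec{\ces_n}{\wberg{p}{\alphapm{n}}}$ and hence an eigenvalue of $\ces_n'$ and thus of $\ces'$ on $\uberg{p}{\alpha}{}'$, giving again infinitely many eigenvalues of the transpose and ruling out supercyclicity.

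The one point requiring genuine care — the likely main obstacle — is justifying that $\lambda$ in the interior of the Banach-space spectrum $\spec{\ces_n}{\wberg{p}{\alphapp{n}}}$ really is an eigenvalue of the \emph{transpose} $\ces_n'$, rather than merely a spectral point. This follows from Proposition~\ref{dahlner result}(i): for such $\lambda$ the operator $\lambda I-\ces_n$ has closed range of codimension exactly one, so its range is a closed hyperplane, whence $(\lambda I-\ces_n)'=\lambda I-\ces_n'$ has a nontrivial kernel — precisely, a nonzero functional annihilating that hyperplane is an eigenvector of $\ces_n'$ with eigenvalue $\lambda$. One then has to check the elementary but necessary compatibility: that the restriction/extension maps between the Banach steps and the limit space are compatible with the transposes so that such a functional does define a nonzero element of the dual of $\iberg{p}{\alpha}$ (resp.\ $\uberg{p}{\alpha}$) fixed by $\ces'$. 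Both checks are routine given the reduced projective (resp.\ regular inductive) limit structure recorded in the introduction, and with them in hand the supercyclicity obstruction closes the argument for both spaces simultaneously.
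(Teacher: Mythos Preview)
Your argument is correct, but it is a genuinely different route from the one the paper takes. The paper's proof is a three-line transfer argument: by \cite[Proposition 2.20]{ABR18} the operator $\ces$ is not supercyclic on $\holom$; since both $\iberg{p}{\alpha}$ and $\uberg{p}{\alpha}$ contain the polynomials, they are continuously and densely embedded in $\holom$, so a (projective) orbit that is dense in either space would have dense image in $\holom$, contradicting the $\holom$ result. No spectral information is used at all.

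Your approach instead recycles the spectral computations of Theorems~\ref{main result} and~\ref{main result ii} together with Proposition~\ref{dahlner result}(i) to produce uncountably many eigenvalues of the transpose $\ces'$, and then invokes the standard obstruction that a supercyclic operator can have at most one eigenvalue for its adjoint. This is heavier but has the virtue of being internal to the paper: it does not rely on the external fact about $\ces$ on $\holom$. Two small comments on execution. For $\iberg{p}{\alpha}$ the passage from an eigenfunctional $\phi\in(\wberg{p}{\alphapp{n}})'$ to one in $(\iberg{p}{\alpha})'$ is indeed routine, since the inclusion $\iota_n\colon\iberg{p}{\alpha}\hookrightarrow\wberg{p}{\alphapp{n}}$ has dense range and $\iota_n'\phi\neq 0$. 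For $\uberg{p}{\alpha}$ your phrasing ``and thus of $\ces'$ on $(\uberg{p}{\alpha})'$'' hides a direction issue: a functional on a single step $\wberg{p}{\alphapm{n}}$ does not automatically extend to the union. The cleanest fix is to bypass the steps and use the continuous dense inclusion $\uberg{p}{\alpha}\hookrightarrow\wberg{p}{\alpha}$ directly: for $\lambda\in D_{\frac{2+\alpha}{p}}$ Proposition~\ref{dahlner result}(i) gives a nonzero $\phi\in(\wberg{p}{\alpha})'$ annihilating the closed hyperplane $(\lambda I-\ces)(\wberg{p}{\alpha})$, and its restriction to $\uberg{p}{\alpha}$ is a nonzero eigenfunctional of $\ces'$ at $\lambda$. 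With that adjustment your argument goes through.
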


\begin{proof}
	It is proved in \cite[Proposition 2.20]{ABR18} that the Cesàro operator $\ces$ acting on $\holom$ fails to be supercyclic. So if $\ces\colon \uberg{p}{\alpha}\to \uberg{p}{\alpha}$ or $\ces\colon\iberg{p}{\alpha}\to \iberg{p}{\alpha}$ were supercyclic, by the fact that $\iberg{p}{\alpha}$ and $\uberg{p}{\alpha}$ are dense in $\holom$ since it contains the polynomials, $\ces$ would also be supercyclic on $\holom$. This is a contradiction. 
\end{proof}

\section{Further discussion on the structures of $\iberg{p}{\alpha}$ and $\uberg{p}{\alpha}$}\label{section: discussion on iberg}
The well-known \textit{Korenblum space} \cite{Kor75} is defined by 
\[
 A^{-\infty}:=\bigcup_{0<\alpha<\infty} A^{-\alpha} = \bigcup_{n\in \nn} A^{-n},
\]
where 
\[
A^{-\alpha}=\{f \in \holom\colon \supx{z}{\udisk}(1-\snorm{z})^\alpha \snorm{f(z)}<\infty\}.
\]
The classical Korenblum space $A^{-\infty}$ is a regular (LB)-space when endowed with the finest locally convex topology which makes each natural inclusion map $A^{-n}\subseteq A^{-\infty}$ continuous. So, $A^{-\infty}=\indx{n}{\nn}A^{-n}$. Let $f \in \wberg{p}{\alpha}$. Then, by means of \cite[Lemma 3.1]{Per08}, we obtain $f \in A^{-(\frac{2+\alpha}{p})}_0$, where 
 \[
 A^{-\alpha}_0=\{f \in \holom\colon \limx{\snorm{z}}{1^-}(1-\snorm{z})^\alpha \snorm{f(z)}=0\} \subseteq A^{-\alpha}.
 \]
 This means $\cup_{\alpha \in \nn} \wberg{p}{\alpha} \subset A^{-\infty}$. For the converse, let us take any $f \in A^{-\infty}$. Then there exists $n \in \nn$ and a constant $M>0$ such that $\supx{z}{\udisk} (1-\snorm{z})^n \snorm{f(z)}<M$. Then,
\[
\dint \snorm{f(z)}^p (1-\snorm{z})^{np} \difs{z} \leq M^p \dint \difs{z} = M^p,
\]
which implies that $f \in \wberg{p}{np}$. This shows $A^{-\infty}\subset \cup_{\alpha>0} \wberg{p}{\alpha}$. See also \cite[p. 111]{HKZ00}. Continuity, compactness and the spectrum of the Cesàro operator acting on the classical Korenblum space have been studied completely in \cite{ABR18-3}. This was the reason we avoided any study of $\ces$ acting on the (LB)-space $\cup_{\alpha>0} \wberg{p}{\alpha}$ although it seems quite tempting concerning the nature of the spaces we deal with.
\subsection{Schwartz property in $\iberg{p}{\alpha}$ and $\uberg{p}{\alpha}$}
For each $0<\mu<\gamma$ it is easy to observe that it holds for the pair of weighted Bergman spaces $\wberg{p}{\mu} \subseteq \wberg{p}{\gamma}$.
\begin{lemma}\label{linking map compact}
	For each pair $0<\mu<\gamma<\infty$, the canonical inclusion map $\iota:\wberg{p}{\mu} \hookrightarrow \wberg{p}{\gamma}$ is compact.
\end{lemma}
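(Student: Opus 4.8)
The plan is to prove that $\iota$ sends the closed unit ball $B_\mu:=\{f\in\wberg{p}{\mu}:\wbnorm{f}{p}{\mu}\le 1\}$ to a relatively compact subset of $\wberg{p}{\gamma}$; since $\wberg{p}{\gamma}$ is a metric space, it is enough to show that every sequence $(f_n)_{n\in\nn}\subseteq B_\mu$ admits a subsequence converging in the norm $\wbnorm{\cdot}{p}{\gamma}$. Note that continuity of $\iota$ is immediate, since $(1-\snorm{z})^{\gamma}\le(1-\snorm{z})^{\mu}$ on $\udisk$ gives $\wbnorm{f}{p}{\gamma}\le\wbnorm{f}{p}{\mu}$.

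The first ingredient is the standard pointwise growth bound for weighted Bergman functions: applying the sub-mean value property of the subharmonic function $\snorm{f}^p$ on the Euclidean disc of radius $\tfrac12(1-\snorm{z})$ centred at $z$ yields a constant $C=C(p,\mu)>0$ with
\[
\snorm{f(z)}\le \frac{C}{(1-\snorm{z})^{(2+\mu)/p}}\,\wbnorm{f}{p}{\mu},\qquad f\in\wberg{p}{\mu},\ z\in\udisk
\]
(equivalently, the inclusion of $\wberg{p}{\mu}$ into $A^{-(2+\mu)/p}$ is bounded; cf.\ \cite[Lemma 3.1]{Per08}). Hence $B_\mu$ is uniformly bounded on each compact subset of $\udisk$ and is therefore a normal family. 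Given $(f_n)\subseteq B_\mu$, Montel's theorem yields a subsequence $(f_{n_k})$ converging uniformly on compact subsets of $\udisk$ to some $f\in\holom$, and Fatou's lemma gives $\wbnorm{f}{p}{\mu}\le\liminf_k\wbnorm{f_{n_k}}{p}{\mu}\le1$, so $f\in\wberg{p}{\mu}\subseteq\wberg{p}{\gamma}$.

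It remains to verify $\wbnorm{f_{n_k}-f}{p}{\gamma}\to0$. Fix $\varepsilon>0$ and split $\udisk=\{\snorm{z}\le r\}\cup\{r<\snorm{z}<1\}$. On the outer annulus, writing $(1-\snorm{z})^{\gamma}=(1-\snorm{z})^{\gamma-\mu}(1-\snorm{z})^{\mu}\le(1-r)^{\gamma-\mu}(1-\snorm{z})^{\mu}$ and using $\wbnorm{f_{n_k}-f}{p}{\mu}\le\wbnorm{f_{n_k}}{p}{\mu}+\wbnorm{f}{p}{\mu}\le2$, one obtains
\[
\intx{\{r<\snorm{z}<1\}}\snorm{f_{n_k}(z)-f(z)}^p\diff{\gamma}\le(1-r)^{\gamma-\mu}\,\wbnorm{f_{n_k}-f}{p}{\mu}^p\le 2^{p}(1-r)^{\gamma-\mu},
\]
and since $\gamma-\mu>0$ this is $<\varepsilon/2$ once $r$ is taken close enough to $1$. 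With such an $r$ fixed, the integrand $\snorm{f_{n_k}-f}^p(1-\snorm{z})^{\gamma}$ is dominated by $\snorm{f_{n_k}-f}^p$ on the set $\{\snorm{z}\le r\}$, which has finite $\difs{z}$-measure and on which $f_{n_k}\to f$ uniformly; hence the integral over $\{\snorm{z}\le r\}$ tends to $0$ as $k\to\infty$ and is $<\varepsilon/2$ for all large $k$. Therefore $\wbnorm{f_{n_k}-f}{p}{\gamma}^p<\varepsilon$ eventually, proving compactness of $\iota$.

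I do not expect a genuine obstacle: the only quantitative input is the growth estimate above, which is routine (and already invoked elsewhere in the paper), and the remainder is the familiar ``normal family plus boundary-tail estimate'' scheme; the one place where the hypothesis $\mu<\gamma$ is used essentially is the factor $(1-r)^{\gamma-\mu}\to0$. Should an operator-norm argument be preferred, one may instead approximate $\iota$ by the dilation operators $R_r\colon f\mapsto f(r\,\cdot)$, $0<r<1$: each $R_r$ is compact from $\wberg{p}{\mu}$ into $\wberg{p}{\gamma}$ because $R_r(B_\mu)$ is a bounded family of functions analytic on a neighbourhood of $\overline{\udisk}$, so a subsequence extracted from it (via Montel) converges uniformly on $\overline{\udisk}$ and hence in $\wberg{p}{\gamma}$; and $\sup_{f\in B_\mu}\wbnorm{f-R_rf}{p}{\gamma}\to0$ as $r\to1^-$ by the same annulus/compact-part splitting, combined with a Cauchy-estimate bound for $f'$ on $\{\snorm{z}\le r\}$ in terms of $\wbnorm{f}{p}{\mu}$.
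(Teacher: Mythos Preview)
Your proof is correct and follows essentially the same approach as the paper: both arguments exploit the factor $(1-\snorm{z})^{\gamma-\mu}\to 0$ to make the contribution of the outer annulus $\{\snorm{z}>r\}$ uniformly small, and then use uniform convergence on the compact disc $\{\snorm{z}\le r\}$ for the inner part. The only cosmetic difference is that the paper phrases the compactness criterion as ``bounded sequences converging to $0$ locally uniformly are sent to norm-null sequences'', whereas you spell out the normal-family step via the pointwise growth bound and Montel's theorem and extract a limit $f$ explicitly (applying Fatou to place it back in $\wberg{p}{\mu}$); your version is slightly more self-contained.
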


\begin{proof}
Fix $0<\mu<\gamma<\infty$, $M>0$. By the fact that $\lim_{\snorm{z} \to 1^-}(1-\snorm{z})^{\gamma-\mu}=0$, for any $\varepsilon>0$ given, we find $R \in (0,1)$ such that $(1-\snorm{z})^{\gamma-\mu}<\frac{\varepsilon}{2M^p}$, for all $\snorm{z}>R$. Let us take $f=(f_j) \in \wberg{p}{\mu}$ with $\wbnorm{f}{p}{\mu} \leq M$, which converges to $0$ in the topology of uniform convergence on compact subsets of $\udisk$. Then, we have
\begin{align*}
	\int\limits_{\snorm{z}>R}\snorm{f_j(z)}^p\diff{\gamma} & = \int\limits_{\snorm{z} > R} \snorm{f_j(z)}^p (1-\snorm{z})^{\gamma-\mu} \diff{\mu} \\
& \leq \frac{\varepsilon}{2M^p}\int\limits_{\snorm{z} > R} \snorm{f_j(z)}^p \diff{\mu}  \leq \frac{\varepsilon}{2M^p} \wbnorm{f}{p}{\mu}^p  \leq \frac{\varepsilon}{2}.
\end{align*}
Now let us find $j_0 \in \nn$ such that we have $\snorm{f_j(z)}<(\frac{\varepsilon}{2(\gamma+1)})^{1/p}$, for all $j \geq j_0$. So, for every $j\geq j_0$ we obtain
\[
	\int\limits_{\snorm{z} \leq R} \snorm{f_j(z)}^p \diff{\gamma} \leq \frac{\varepsilon}{2(\gamma+1)} \int\limits_{\snorm{z} \leq R} \diff{\gamma} = \frac{\varepsilon}{2}.
\]
Therefore, combining the arguments above, we obtain $\wbnorm{\iota(f_j)}{p}{\gamma}^p \leq \varepsilon$. This means, $f_j$ converges to $0$ in norm topology as well. Then, $\iota$ is a compact operator. 
\end{proof}
\begin{corollary}\label{schwartz}
 For a fixed $1<p<\infty$ and for all $0<\alpha<\infty$. Then,
 \begin{enumerate}[label=\normalfont (\arabic*)]
 	\item $\iberg{p}{\alpha}$ is a Fréchet-Schwartz space.
 	\item $\uberg{p}{\alpha}$ is a (DFS)-space.
 \end{enumerate}
\end{corollary}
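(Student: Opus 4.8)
The plan is to deduce both statements from Lemma~\ref{linking map compact} together with the standard descriptions of Fréchet--Schwartz spaces and (DFS)-spaces through their defining sequences of Banach spaces; no new estimates are needed beyond the compactness of the inclusions already established there.

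For part (1), I would use that a Fréchet space is a Fréchet--Schwartz space precisely when it is (isomorphic to) a reduced projective limit $\projx{n}{\nn}(X_n,\norm{\cdot}_n)$ of Banach spaces all of whose linking maps $X_{n+1}\to X_n$ are compact. By \eqref{projlim verion} we have $\iberg{p}{\alpha}=\projx{n}{\nn}\wberg{p}{\alphapp{n}}$; since $\iberg{p}{\alpha}$ contains the polynomials, which are dense in each $\wberg{p}{\mu}$, the local Banach space attached to the norm $\ibnorm{\cdot}{p}{\alpha}{n}$ is exactly $\wberg{p}{\alphapp{n}}$ and the representation is reduced, the linking maps being the canonical inclusions $\iota_{n,n+1}\colon\wberg{p}{\alphapp{n+1}}\hookrightarrow\wberg{p}{\alphapp{n}}$. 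Because $\alpha+\tfrac1{n+1}<\alpha+\tfrac1n$, Lemma~\ref{linking map compact} applied with $\mu=\alpha+\tfrac1{n+1}$ and $\gamma=\alpha+\tfrac1n$ shows each $\iota_{n,n+1}$ is compact, and the conclusion follows.

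For part (2), I would invoke the parallel fact that a Hausdorff inductive limit of an increasing sequence of Banach spaces whose (injective) step maps are all compact is a (DFS)-space, and is then automatically regular and complete. By \eqref{injlim verion}, after restricting to the cofinal set of indices $n>1/\alpha$ so that $\alpha-\tfrac1n>0$ throughout (which does not change the inductive limit), $\uberg{p}{\alpha}=\indx{n}{\nn}\wberg{p}{\alphapm{n}}$; this inductive limit is Hausdorff because each step embeds continuously into $\holom$. The step maps are the inclusions $\wberg{p}{\alphapm{n}}\hookrightarrow\wberg{p}{\alphapm{n+1}}$, and since $\alpha-\tfrac1n<\alpha-\tfrac1{n+1}$, Lemma~\ref{linking map compact} (with $\mu=\alpha-\tfrac1n$, $\gamma=\alpha-\tfrac1{n+1}$) makes each of them compact. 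Hence $\uberg{p}{\alpha}$ is a (DFS)-space.

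As for obstacles: there is essentially none left to overcome, since all the substance sits in Lemma~\ref{linking map compact}. The only points one must not skip are bookkeeping ones — identifying the local Banach spaces of the projective limit with the $\wberg{p}{\alphapp{n}}$ (which needs density of the polynomials, already recorded in Section~\ref{section: ces on iberg}), checking Hausdorffness of the inductive limit via the continuous embedding into $\holom$, and dealing with the indices $n\le 1/\alpha$ in the definition of $\uberg{p}{\alpha}$ by passing to a cofinal subsequence.
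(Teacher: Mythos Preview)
Your proposal is correct and follows essentially the same route as the paper: both parts are deduced from Lemma~\ref{linking map compact} together with the standard characterizations of Fréchet--Schwartz and (DFS)-spaces via compact linking maps (the paper cites \cite[\S 21.1, Example~1(b)]{Jar81} and \cite[Proposition~25.20]{Vog97} for these). Your version simply spells out the bookkeeping (reducedness via density of polynomials, Hausdorffness via the embedding into $\holom$, and passing to a cofinal index set $n>1/\alpha$) that the paper leaves implicit.
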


\begin{proof}
\begin{enumerate}[wide, labelwidth=!, labelindent=0pt, label=\normalfont(\arabic*)]\setlength\itemsep{0.5em}
	\item By \eqref{projlim verion}, this result follows directly by the combination of arguments in Lemma~\ref{linking map compact}, and \cite[\S 21.1, Example 1(b)]{Jar81}.
	\item By \eqref{injlim verion}, it follows by Lemma~\ref{linking map compact} and \cite[Proposition 25.20]{Vog97}. 
	\end{enumerate}
\end{proof}

\subsection{On nuclearity of $\iberg{p}{\alpha}$ and $\uberg{p}{\alpha}$}
A sequence $(x_j)_{j=0}^\infty$ in a locally convex space $E$ is said to be a \textit{Schauder basis} if each element $y \in E$ can be written uniquely as $y=\sum_{j=1}^\infty f_j(y)x_j$, where $f_j\colon E \to \mathbb{K}$, $j \in \nn$ are continuous linear forms. See \cite{Jar81} for further information on Schauder basis in Fréchet spaces. The work of Lusky \cite[Theorem 2.2]{Lus00} tells us that the monomials $\varLambda=\{z^j\}_{j=0}^\infty$ is a Schauder basis for $\wberg{p}{\alpha}$, since it is proved for a more general setup (see also \cite{BLT19}). In the light of that, and with the help of Fréchet and inductive limit topologies, it is straightforward to assert that
\begin{theorem}\label{monomial basis}
	Let $1<p<\infty$ be fixed, and let $0<\alpha<\infty.$ Then, 
	\begin{enumerate}[label=\normalfont(\arabic*)]
		\item $\varLambda$ is a Schauder basis for $\iberg{p}{\alpha}$.
		\item $\varLambda$ is a Schauder basis for $\uberg{p}{\alpha}$.
	\end{enumerate}
\end{theorem}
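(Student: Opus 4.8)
The plan is to use the Taylor coefficient functionals as the common coordinate functionals and to transfer the basis property from the Banach steps $\wberg{p}{\alphapp{n}}$ (resp. $\wberg{p}{\alphapm{n}}$) to the limit, exactly as continuity of $\ces$ was transferred in Section~\ref{section: ces on iberg}. For $f\in\holom$ and $N\in\nn_0$ write $S_Nf:=\sum_{j=0}^{N}\tay{f}{j}z^{j}$ for the $N$-th Taylor polynomial of $f$. The first thing I would record is that for every $\mu>0$ the map $f\mapsto\tay{f}{j}$ is continuous on $\wberg{p}{\mu}$ and is precisely the coefficient functional biorthogonal to $\varLambda$ in $\wberg{p}{\mu}$: the monomials form a Schauder basis of $\wberg{p}{\mu}$ by \cite[Theorem 2.2]{Lus00}, the associated coefficient functionals are uniquely determined, and $\tay{z^{k}}{j}=\delta_{jk}$. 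The point is that these functionals do not depend on $\mu$ — they are the restrictions to $\wberg{p}{\mu}$ of the Taylor coefficient functionals on $\holom$ — and every $z^{j}$, being a polynomial, lies in both $\iberg{p}{\alpha}$ and $\uberg{p}{\alpha}$. So $(\tay{\cdot}{j},z^{j})_{j\ge0}$ is the natural candidate biorthogonal system on each of the two spaces, and it only remains to check, in each case, that the coordinate functionals are continuous and that the partial sums $S_Nf$ converge to $f$.

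For part~(1), I would fix $f\in\iberg{p}{\alpha}$. Continuity of each inclusion $\iota_n\colon\iberg{p}{\alpha}\hookrightarrow\wberg{p}{\alphapp{n}}$ makes every $f\mapsto\tay{f}{j}$ continuous on $\iberg{p}{\alpha}$, and \cite[Theorem 2.2]{Lus00} applied in $\wberg{p}{\alphapp{n}}$ gives $\ibnorm{S_Nf-f}{p}{\alpha}{n}=\wbnorm{S_Nf-f}{p}{\alphapp{n}}\to0$ as $N\to\infty$, for every $n$. Since the seminorms $\ibnorm{\cdot}{p}{\alpha}{n}$ generate the Fréchet topology by \eqref{projlim verion}, this yields $S_Nf\to f$, i.e. $f=\sum_{j=0}^{\infty}\tay{f}{j}z^{j}$ in $\iberg{p}{\alpha}$. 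Uniqueness of the expansion I would deduce from $\holom$: since $\iberg{p}{\alpha}$ embeds continuously into $\wberg{p}{\alphapp{1}}$ and hence into $\holom$, its topology is finer than that of $\holom$, so any relation $\sum_j a_jz^{j}=0$ in $\iberg{p}{\alpha}$ already holds in $\holom$ and forces $a_j=0$ for all $j$.

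For part~(2), I would fix $f\in\uberg{p}{\alpha}$; by \eqref{injlim verion} there is $n$ with $f\in\wberg{p}{\alphapm{n}}$, whence $\wbnorm{S_Nf-f}{p}{\alphapm{n}}\to0$ by \cite[Theorem 2.2]{Lus00}, and continuity of the inclusion $\wberg{p}{\alphapm{n}}\hookrightarrow\uberg{p}{\alpha}$ propagates this to $S_Nf\to f$ in $\uberg{p}{\alpha}$. The functionals $f\mapsto\tay{f}{j}$ are continuous on the inductive limit because their restriction to each step is continuous; uniqueness follows as in part~(1), since each step embeds continuously into $\holom$ and so $\uberg{p}{\alpha}$ again carries a topology finer than that of $\holom$. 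This gives that $\varLambda$ is a Schauder basis in both cases.

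I do not anticipate a genuine difficulty here; the argument is bookkeeping once \cite[Theorem 2.2]{Lus00} is in hand. The two places that need care are ensuring that the coordinate functionals on the different Banach steps are literally the same family — which is exactly why I identify them with Taylor coefficient functionals on $\holom$ — and transferring convergence of the partial sums to the limit topology: from all steps simultaneously for $\iberg{p}{\alpha}$, and from the single step containing $f$ for $\uberg{p}{\alpha}$, the latter being legitimate because $\uberg{p}{\alpha}$ is a (DFS)-space by Corollary~\ref{schwartz}, in particular a Hausdorff regular inductive limit.
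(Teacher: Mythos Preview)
Your proposal is correct and follows essentially the same route as the paper: in both cases one invokes \cite[Theorem 2.2]{Lus00} on the Banach steps $\wberg{p}{\alphapp{n}}$ (resp.\ on the step $\wberg{p}{\alphapm{n}}$ containing $f$) and then transfers convergence of the Taylor partial sums to $\iberg{p}{\alpha}$ via the defining seminorms (resp.\ to $\uberg{p}{\alpha}$ via continuity of the inclusion of a step into the inductive limit). Your write-up is simply more detailed than the paper's terse version, in that you also make explicit the continuity of the Taylor coefficient functionals and the uniqueness of the expansion through the embedding into $\holom$.
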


\begin{proof}
\begin{enumerate}[wide, labelwidth=!, labelindent=0pt, label=\normalfont(\arabic*)]\setlength\itemsep{0.5em}
\item We need to prove that the Taylor series of any $f \in \iberg{p}{\alpha}$, $\alpha\geq 0$, converges in $\iberg{p}{\alpha}$ to $f$.  Let us fix $\mu>\alpha$ and pick $\mu_1$ with $\alpha<\mu_1<\mu$. Since $f \in \wberg{p}{\mu_1}$ we may apply \cite[Theorem 2.2]{Lus00} to deduce that the Taylor series of $f$ converges to $f$ in $\iberg{p}{\alpha}$. This implies $\varLambda$ is a Schauder basis for $\iberg{p}{\alpha}$.
\item A direct consequence of \cite[Theorem 2.2]{Lus00} and the properties of inductive limits. 
\end{enumerate}
\end{proof}
Thanks to Theorem~\ref{monomial basis}, we are now allowed to make use of Grothendieck-Pietsch criterion to determine whether $\iberg{p}{\alpha}$ and $\uberg{p}{\alpha}$ are nuclear or not. We adapt this approach from \cite{BLT18}. First we need the following estimate, which is essentially known (cf. \cite[Lemma 4]{SW71}). 

\begin{lemma}\label{monomial estimate}
	Let $1<p<\infty$, and $0<\alpha<\infty$. For $j \in \nn$,
	\[
		\wbnorm{z^j}{p}{\alpha} \simeq \left(\frac{1}{j^{\alpha+1}}\right)^{1/p}.
 	\]
\end{lemma}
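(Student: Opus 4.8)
The plan is to reduce the two--dimensional integral defining $\wbnorm{z^j}{p}{\alpha}^p$ to a one--dimensional Beta--type integral and to estimate that integral from above and below by $j^{-(\alpha+1)}$, up to multiplicative constants depending only on $p$ and $\alpha$. First I would pass to polar coordinates: since $\diff{\alpha}=(1-\snorm{z})^\alpha\difs{z}$ and $\difs{z}=\frac1\pi\,\dif x\,\dif y$, one gets
\[
\wbnorm{z^j}{p}{\alpha}^p=\dint\snorm{z}^{jp}\diff{\alpha}=\dint\snorm{z}^{jp}(1-\snorm{z})^\alpha\,\difs{z}=2\int_0^1 r^{jp+1}(1-r)^\alpha\,\dif r.
\]
Writing $N:=jp+1$, the whole task is to prove $\int_0^1 r^{N}(1-r)^\alpha\,\dif r\simeq N^{-(\alpha+1)}$, with constants independent of $j$; since $j\le N\le (p+1)j$ for $j\in\nn$, this is equivalent to $\simeq j^{-(\alpha+1)}$, and taking $p$-th roots gives the assertion. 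One could recognise the integral as the Beta integral $\Gamma(N+1)\Gamma(\alpha+1)/\Gamma(N+\alpha+2)$ and quote Stirling's formula, but I would rather give the following elementary two--sided bound, which also makes the dependence of the constants transparent.

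For the upper bound, use $\ln r\le r-1$, hence $r^{N}\le e^{-N(1-r)}$; substituting $s=1-r$,
\[
\int_0^1 r^{N}(1-r)^\alpha\,\dif r\le\int_0^1 e^{-Ns}s^\alpha\,\dif s\le\int_0^\infty e^{-Ns}s^\alpha\,\dif s=\frac{\Gamma(\alpha+1)}{N^{\alpha+1}}.
\]
For the lower bound, I would discard all of the integral except the piece over $[1-\tfrac1N,1]$, on which $r^{N}\ge(1-\tfrac1N)^{N}\ge\tfrac14$ (the sequence $(1-\tfrac1N)^{N}$ is increasing in $N$ and equals $\tfrac14$ at $N=2$, while here $N=jp+1>2$), so that
\[
\int_0^1 r^{N}(1-r)^\alpha\,\dif r\ge\frac14\int_{1-1/N}^{1}(1-r)^\alpha\,\dif r=\frac{1}{4(\alpha+1)}\cdot\frac{1}{N^{\alpha+1}}.
\]
Combining the two displays yields $\frac{1}{4(\alpha+1)}N^{-(\alpha+1)}\le\int_0^1 r^{N}(1-r)^\alpha\,\dif r\le\Gamma(\alpha+1)N^{-(\alpha+1)}$, hence $\wbnorm{z^j}{p}{\alpha}^p\simeq (jp+1)^{-(\alpha+1)}\simeq j^{-(\alpha+1)}$ for all $j\in\nn$, and the claim follows.

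There is no genuine obstacle here: the computation is routine, the only point deserving a line of care being the lower estimate, where restricting to the \emph{shrinking} interval $[1-1/N,1]$ (rather than a fixed one, which would contribute an exponentially small factor $r^N$) is what makes the argument work, together with the elementary uniform bound $(1-1/N)^N\ge\tfrac14$ for $N\ge2$.
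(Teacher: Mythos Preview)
Your argument is correct. Both you and the paper reduce, via polar coordinates, to estimating $\int_0^1 r^{jp}(1-r)^\alpha\,dr$ (you keep the extra factor $r$ from the Jacobian, the paper absorbs it into the $\simeq$), and from there the two proofs diverge only in how the asymptotic $\simeq j^{-(\alpha+1)}$ is obtained. The paper identifies the integral as the Beta value $B(jp+1,\alpha+1)=\Gamma(jp+1)\Gamma(\alpha+1)/\Gamma(jp+\alpha+2)$ and invokes Stirling's formula for the asymptotics of the Gamma quotient. You mention this option and then instead give a direct two--sided bound: the upper bound via $r^N\le e^{-N(1-r)}$ and the Gamma integral, the lower bound by restricting to the shrinking window $[1-1/N,1]$ together with $(1-1/N)^N\ge\tfrac14$. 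Your route is slightly longer to write down but entirely elementary, makes the constants explicit, and avoids quoting Stirling; the paper's route is more compact but leans on an external asymptotic result. Both are perfectly acceptable and lead to the same conclusion.
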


\begin{proof}
	For any $f \in \wberg{p}{\alpha}$ we have $\wbnorm{f}{p}{\alpha}^p = \int_0^1 r(1-r)^\alpha\left(\frac{1}{2\pi}\int_0^{2\pi}\snorm{f(re^{it})}^p \dif t \right)\dif r$. Let $\beta(\cdot,\cdot)$ denote the usual Beta function, and $\varGamma(\cdot)$ denote the usual Gamma function. Then we have the estimate
\begin{align*}
	\wbnorm{z^j}{p}{\alpha}^p & \simeq \int_0^1 (1-r)^\alpha\left(\frac{1}{2\pi}\int_0^{2\pi}(r^j)^p \dif t \right)\dif r  = \int_0^1 (1-r)^\alpha r^{jp}\dif r\\
	& = \beta(jp+1,\alpha+1) = \frac{\Gamma(jp+1)\Gamma(\alpha+1)}{\Gamma(jp+\alpha+2)} \\
	& \simeq \frac{(jp)^{jp+\frac{1}{2}}\alpha^{\alphapp{2}}e^{-(\alpha+jp)}}{(jp+\alpha+1)^{jp+\alpha+\frac{3}{2}}e^{-(jp+\alpha+1)}}\\
	& = \left(\frac{jp}{jp+\alpha+1}\right)^{jp+\frac{1}{2}}\frac{\alpha^\alphapp{2}}{e(jp+\alpha+1)} \simeq \frac{1}{j^{\alpha+1}},
\end{align*}
where the first estimate is due to an extension of Stirling's formula (see e.g. \cite[p. 257]{AS72}). 
\end{proof}

\begin{theorem}
	Let $1<p<\infty$ be fixed and let $0<\alpha<\infty$. Then,
	\begin{enumerate}[label=\normalfont(\arabic*)]
		\item The Fréchet-Schwartz space $\iberg{p}{\alpha}$ fails to be nuclear.
		\item The (DFS)-space $\uberg{p}{\alpha}$ fails to be nuclear.
	\end{enumerate}
\end{theorem}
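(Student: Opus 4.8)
The plan is to apply the Grothendieck--Pietsch criterion for nuclearity to the Schauder basis $\varLambda=\{z^j\}$ furnished by Theorem~\ref{monomial basis}, feeding it the sharp size estimate for the basis elements provided by Lemma~\ref{monomial estimate}. In both cases the whole matter collapses to the (non)convergence of one elementary numerical series, so the proof should be short once the criterion is set up correctly.

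For part (1): $\iberg{p}{\alpha}=\projx{n}{\nn}\wberg{p}{\alphapp{n}}$ carries the increasing fundamental system of norms $(\ibnorm{\cdot}{p}{\alpha}{n})_{n}$, and with respect to the basis $\varLambda$ the Grothendieck--Pietsch criterion says that $\iberg{p}{\alpha}$ is nuclear if and only if for every $n\in\nn$ there is $m>n$ with $\sum_{j\geq 1}\ibnorm{z^j}{p}{\alpha}{n}/\ibnorm{z^j}{p}{\alpha}{m}<\infty$. By Lemma~\ref{monomial estimate}, $\ibnorm{z^j}{p}{\alpha}{n}=\wbnorm{z^j}{p}{\alphapp{n}}\simeq j^{-(\alpha+1/n+1)/p}$, so each quotient is $\simeq j^{(1/m-1/n)/p}$, the factor $\alpha$ cancelling. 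I would then just observe that for $n<m$ one has $0<\tfrac1n-\tfrac1m<1$, whence, since $p>1$, the number $\delta:=(\tfrac1n-\tfrac1m)/p$ satisfies $0<\delta<1$; therefore $\sum_{j\geq1}j^{-\delta}=\infty$. As this holds for every admissible $m$, the criterion is violated for each $n$, and $\iberg{p}{\alpha}$ is not nuclear.

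For part (2): I would use that $\uberg{p}{\alpha}$ is a (DFS)-space by Corollary~\ref{schwartz}, so it is nuclear precisely when the regular inductive limit $\indx{n}{\nn}\wberg{p}{\alphapm{n}}$ has, for each $n$, some $m>n$ for which the canonical (compact) linking map $\wberg{p}{\alphapm{n}}\hookrightarrow\wberg{p}{\alphapm{m}}$ is nuclear. Since $\varLambda$ is a common Schauder basis of all the steps and the linking maps act diagonally on it, passing to the Köthe co-echelon model of $\uberg{p}{\alpha}$ attached to $\varLambda$ (following the device of \cite{BLT18}) reduces nuclearity of the $n\to m$ linking map to $\sum_{j\geq1}\wbnorm{z^j}{p}{\alphapm{m}}/\wbnorm{z^j}{p}{\alphapm{n}}<\infty$. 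By Lemma~\ref{monomial estimate} this quotient equals $\simeq j^{(1/m-1/n)/p}$ once more, which is exactly the divergent series of part (1). Hence no linking map is nuclear, and $\uberg{p}{\alpha}$ is not nuclear.

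The only point requiring real care is making the Grothendieck--Pietsch machinery genuinely applicable in part (2): although $\varLambda$ is not an unconditional basis of the steps $\wberg{p}{\alphapm{n}}$ when $p\neq2$, the relevant linking maps are still diagonal with respect to $\varLambda$, and it is precisely this that allows one to pass to the co-echelon description and invoke the criterion — this is the tool borrowed from \cite{BLT18}. Once that is in place, both statements rest on the same trivial observation: for $1<p<\infty$ and $n<m$ the exponent $(\tfrac1n-\tfrac1m)/p$ lies strictly below $1$, so the ratios of monomial norms between any two levels are never summable.
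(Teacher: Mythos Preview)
Your proof is correct and follows the same route as the paper: invoke the Schauder basis $\varLambda$ from Theorem~\ref{monomial basis}, apply the Grothendieck--Pietsch criterion, and use Lemma~\ref{monomial estimate} to see that the relevant series is $\sum_j j^{-\delta}$ with $0<\delta<1$. The paper in fact only spells out part~(1) (taking $n=1$ rather than a general $n$) and leaves part~(2) implicit; your treatment of part~(2) via nuclearity of the linking maps and the co-echelon model from \cite{BLT18} is a reasonable way to fill that gap, and you are right to flag that the passage to the diagonal/K\"othe picture is the one point needing care when $p\neq2$.
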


\begin{proof}
	We give the argument for part (1). Suppose that $\iberg{p}{\alpha}$ is nuclear. Then, by Grothendieck-Pietsch criterion (see e.g. \cite[Proposition 28.15]{Vog97}), given $n=1$ we find $m>1$ such that 
	\[
	\sumx{j}{1}{\infty} \frac{\wbnorm{z^j}{p}{\alpha+1}}{\wbnorm{z^j}{p}{\alpha+\frac{1}{m}}}<\infty.
	\]
	On the other hand, by Lemma~\ref{monomial estimate} we have
	\begin{align*}
		\sumx{j}{1}{\infty} \frac{\wbnorm{z^j}{p}{\alpha+1}}{\wbnorm{z^j}{p}{\alpha+\frac{1}{m}}} & \simeq \left(\sumx{j}{1}{\infty} \frac{j^{\alpha+\frac{1}{m}+1}}{j^{\alpha+2}}\right)^{1/p} \\
		& = \left(\sumx{j}{1}{\infty} \frac{1}{j^{1-\frac{1}{m}}}\right)^{1/p} \\
		& = \infty.
	\end{align*}
	This is a contradiction. 
\end{proof}

\section*{Acknowledgements}
This article was completed during the autor's stay at Instituto Universitario de Matemática Pura y Aplicada, Universitat Politècnica de València. The author is indepted to Prof. José Bonet for his helpful suggestions, and for the kind hospitality.
\section*{Declarations}

\textbf{Funding} This research is funded by The Scientific and Technological Research Council of Turkey International Postdoctoral Research Fellowship (TÜBİTAK 2219) with grant number 1059B191800828. \\

\bibliographystyle{spmpsci}
\bibliography{ccesaro}
\end{document}